\theoremstyle{definition}
\theoremstyle{plain}
\newtheorem{theorem}{Theorem}[section]
\newtheorem{lemma}{Lemma}[section]
\newtheorem{remark}{Remark}[section]
\newtheorem{corollary}{Corollary}
\newtheorem{example}{Example}[section]
\journal{Journal }
\begin{document}

\begin{frontmatter}

%% Title, authors and addresses

%% use the tnoteref command within \title for footnotes;
%% use the tnotetext command for theassociated footnote;
%% use the fnref command within \author or \address for footnotes;
%% use the fntext command for theassociated footnote;
%% use the corref command within \author for corresponding author footnotes;
%% use the cortext command for theassociated footnote;
%% use the ead command for the email address,
%% and the form \ead[url] for the home page:
 \title{Stability analysis of a second-order difference scheme for
the time-fractional mixed sub-diffusion and diffusion-wave equation
  }

%  \title{A high-order L2-compact difference scheme for the numerical solution of a nonlinear multi-term time-fractional Sobolev type convection-diffusion equation with delay  \tnoteref{grant}}
% \tnotetext[grant]{The reported study was supported  by
%                 Russian Science Foundation (Grant No. 22-21-00363)}
                 \author[label1]{Anatoly A.
			Alikhanov\corref{cor1}}
		\cortext[cor1]{Corresponding author}
		\ead{aaalikhanov@gmail.com}
		\address[label1]{North-Caucasus Center for Mathematical Research, North-Caucasus Federal University, Pushkin str. 1,  Stavropol, 355017,  Russia}
\author[label1]{Mohammad Shahbazi Asl}
		\ead{mshahbazia@yahoo.com}
\author[label2]{Chengming Huang}
\ead{chengming\_huang@hotmail.com}
\address[label2]{School of Mathematics and Statistics, Huazhong University of Science and Technology, Wuhan, 430074, China}
 
%\title{}

%% use optional labels to link authors explicitly to addresses:
%% \author[label1,label2]{}
%% \affiliation[label1]{organization={},
%%             addressline={},
%%             city={},
%%             postcode={},
%%             state={},
%%             country={}}
%%
%% \affiliation[label2]{organization={},
%%             addressline={},
%%             city={},
%%             postcode={},
%%             state={},
%%             country={}}

%\author{}
%
%\affiliation{organization={},%Department and Organization
%            addressline={},
%            city={},
%            postcode={},
%            state={},
%            country={}}

\begin{abstract}

This study investigates a class of initial-boundary value problems 
 pertaining to  the time-fractional mixed sub-diffusion and diffusion-wave equation 
(SDDWE).
To facilitate the development of a numerical method and analysis, the original problem is transformed into a 
new integro-differential model which includes the Caputo derivatives and the Riemann-Liouville fractional integrals 
with orders belonging to $(0,1)$.
By providing an a priori estimate of the solution, we have established the existence and uniqueness of a numerical solution for the problem. 
We propose a second-order method to approximate the fractional Riemann-Liouville integral
 and employ an L2 type formula
to approximate the Caputo derivative. 
This results in a  method with a temporal accuracy of second-order for approximating the considered model. 
The proof of the unconditional stability of the proposed difference scheme
is established.
Moreover, we demonstrate the proposed method's potential to construct and analyze a second-order L2-type numerical scheme for a broader class of the
time-fractional  mixed SDDWEs with 
 multi-term time-fractional derivatives.
Numerical results are presented to assess the accuracy of the method and validate the theoretical findings.

\end{abstract}

%%%Graphical abstract
%\begin{graphicalabstract}
%%\includegraphics{grabs}
%\end{graphicalabstract}

%%Research highlights
%\begin{highlights}
%\item Research highlight 1
%\item Research highlight 2
%\end{highlights}

\begin{keyword}
%% keywords here, in the form: keyword \sep keyword
Mixed sub-diffusion and diffusion-wave equation
\sep L2 formula
\sep Stability and Convergence analysis
\sep Caputo derivative 
\sep Riemann-Liouville integral 
%% PACS codes here, in the form: \PACS code \sep code

%% MSC codes here, in the form: \MSC code \sep code
%% or \MSC[2008] code \sep code (2000 is the default)

\end{keyword}

\end{frontmatter}

%% \linenumbers

%% main text
%\section{}
%\label{}

%% The Appendices part is started with the command \appendix;
%% appendix sections are then done as normal sections
%% \appendix

%% \section{}
%% \label{}
\section{Introduction} \label{sec:1}

The increasing utilization of time fractional partial differential equations (TFPDEs) in mathematical models has made them a crucial area of research in the field of fractional calculus. 
TFPDEs based models offer advantages in simulating memory, hereditary, non-local characteristics,  relaxation vibrations, spatial heterogeneity, anomalous diffusion, and long-range interaction in viscoelasticity, electrochemistry and ﬂuid mechanics
\cite{asl2019new,asl2021high}. 
These unique features make them not only more effective but also more accurate and realistic when employed in process modeling compared to classical integer-order equations
\cite{asl2017improved,asl2018novel}. This holds true particularly in diverse fields of science and technology, including medical  and biological systems, options pricing models in ﬁnancial markets, economics, signal processing control systems, and more
\cite{roohi2015switching,taheri2023finite}.

Diffusion, being the most common physical phenomenon in nature, is described using TFPDEs that formulate anomalous diffusion processes
\cite{yan2021identify}. 
Two significant subclasses of TFPDEs are the time-fractional sub-diffusion equations (TFSDEs) with a fractional order in the range of (0, 1) and the time-fractional diffusion-wave equations (TFDWEs) with a fractional order in the range of (1, 2). 
These subclasses have garnered significant attention from researchers due to their distinctive characteristics and their relevance in modeling various phenomena.
However, there are some fundamental real-life systems whose characteristics cannot be fully captured by
 employing   TFSDEs or TFDWEs independently. In such  cases, 
 the mixed time-fractional SDDWEs can be utilized as a effective mathematical model
 \cite{zhao2019anisotropic}.
 The two-term mixed time-fractional SDDWEs offer enhanced accuracy and adaptability compared to the single-term TFSDEs or TFDWEs
  \cite{du2021temporal,ding2021development}.
These models are particularly beneficial at representing anomalous diffusion processes and capturing the properties of media, including power-law frequency dependence
\cite{ezz2020numerical}.
 Additionally, they are adept at modeling various types of viscoelastic damping, modelling the unsteady flow of a fractional Maxwell fluid, and describing the behavior of an Oldroyd-B fluid
 \cite{shen2021two}.

In a bounded  convex polygonal domain  
$\Omega \subset \mathbf{R}^{d}$, $d = 1, 2, 3,$
 with the Lipschitz continuous boundary $\partial \Omega$ 
we investigate an initial-boundary value problem for the time-fractional mixed SDDWE, given by
\begin{align}
\label{eq1}
&
\partial_{0t}^{\alpha+1}u(\mbox{\boldmath${x}$}, t) + \varkappa\partial_{0t}^{\beta}u(\mbox{\boldmath${x}$}, t) + \mathcal{A}u(\mbox{\boldmath${x}$},t) = g(\mbox{\boldmath${x}$}, t),
\\&
u(\mbox{\boldmath${x}$}  ,t) = 0, 
\qquad \qquad \qquad \qquad \qquad \quad
\mbox{\boldmath${x}$}\in \partial \Omega,
\label{eq2}
\\&
u(\mbox{\boldmath${x}$}, 0) = \phi(\mbox{\boldmath${x}$}),\quad u_t(\mbox{\boldmath${x}$}, 0) = \psi(\mbox{\boldmath${x}$}), 
~ \quad
 \mbox{\boldmath${x}$}\in \overline \Omega,
\label{eq3}
\end{align}
%subject to the the boundary condition
where 
$
\varkappa\geq 0, $
and
$(\mbox{\boldmath${x}$}, t) \in \Omega\times(0,T].$
Meanwhile, the operators $\partial_{0t}^{\alpha+1}u(\mbox{\boldmath${x}$}, t)$ and $\partial_{0t}^{\beta}u(\mbox{\boldmath${x}$}, t)$ 
with 
$ 0<\alpha<1$ and $ 0< \beta<1$
denote the fractional order Caputo derivatives  defined by
\begin{align}
%\partial_{0t}^{\alpha+1}u(\mbox{\boldmath${x}$}, t) = 
%\int\limits_{0}^{t}\omega_{1-\alpha}^{(t-\xi)}\frac{\partial^2 u(\mbox{\boldmath${x}$}, \xi)}{\partial \xi^2} d\xi, 
%\quad
% 0<\alpha<1
%\\
\partial_{0t}^{\nu}u(\mbox{\boldmath${x}$}, t) = 
\int\limits_{0}^{t}\omega_{n-\nu}^{(t-\xi)} \frac{\partial^n u(\mbox{\boldmath${x}$}, \xi)}{\partial \xi^n} d\xi, 
\quad
 n-1< \nu < n,
\end{align}
in which the kernel is denoted by
$\omega_{\nu}^{(t)}=t^{\nu -1}/\Gamma(\nu)$,
$t>0$.
The elliptic operator $\mathcal{A}$ in \eqref{eq1} 
is introduced as 
\begin{align}%\label{}
 \mathcal{A}u(\boldsymbol{x},t) = -\text{div}{\left(p(\boldsymbol{x} \right) \, \text{grad}\, u(\boldsymbol{x},t))} + q(\boldsymbol{x})u(\boldsymbol{x},t)
 ,
\end{align}
with coefficients 
$ 0<c_1\leq p(\boldsymbol{x})\leq c_2, $
and
 $q(\boldsymbol{x}) \geq 0.$
In the Hilbert space $\mathcal{H} = L_2(\Omega)$ the operator $\mathcal{A}$ is self-adjoint and positive definite, that is
\begin{align*}
\mathcal{A} = \mathcal{A}^{*} \geq \varkappa_{\mathcal{A}} \mathcal{I},
\quad
\varkappa_{\mathcal{A}} > 0,
\end{align*}
where $\mathcal{I}$ is the identity operator from the Hilbert space $\mathcal{H}$.
The origin of equation \eqref{eq1} can be traced back to the
 classical telegraph equation, a second-order hyperbolic equation, with specific values of
$\alpha=2$ 
and 
$\beta=1$. The telegraph equation has been widely employed to model the vibrations of numerous structures, including machines, beams, and buildings
\cite{du2021temporal}.
 Additionally, equation  \eqref{eq1}  allows for the consideration of various types of FDEs, such as the fractional cable equation, the generalized Oldroyd-B fluid model in non-Newtonian fluids, the generalized Maxwell fluid model, and a heated generalized second-grade fluid model
 \cite{feng2019finite}.
Even though there have been some attempts to seek the analytical solution to the 
model \eqref{eq1}--\eqref{eq3}, the exact analytical answers that have been obtained remain exceedingly complicated and difficult to estimate.
Therefore, it becomes paramount to devise effective and precise numerical algorithms to investigate   such models 
\cite{feng2019finite}.

In the specific case where model \eqref{eq1}, reduces exclusively to a TFSDEs, 
these types of models find widespread application in describing anomalous diffusion processes and mechanics such as polymers, organisms, biopolymers, ecosystems, liquid crystals, proteins, and fractal and percolation clusters. 
\cite{ma2019efficient,hajimohammadi2021numerical}.
%Numerous researchers have dedicated their efforts to exploring numerical techniques specifically designed  for solving TFSDEs. 
For the sufficiently smooth solutions, 
two high order time discretization schemes of order 
$3-\alpha$ and $4-\alpha$ are presented in 
\cite{gao2014new} and \cite{cao2015high} respectively, for approximation of the Caputo derivative.
 These schemes are subsequently applied to solve TFSDEs.
 However, neither of these papers provide error estimates for the proposed methods.
  Lv and Xu \cite{lv2016error} conducted a comprehensive error analysis for the numerical method presented in \cite{gao2014new}, offering valuable insights into its accuracy. 
 Yan et al. \cite{yan2018analysis} employed the L1 formula to estimate the Caputo fractional derivative and made certain modifications to the initial steps of the numerical methods utilized  for solving TFSDEs.
They provided a proof that for both smooth and nonsmooth  data, the modified numerical technique 
 meets  convergence order of $2-\alpha$.
 To address the challenge of solving TFSDEs, 
 Wang et al. \cite{wang2020two} presented two novel temporal discretization algorithms designed specifically for this type of equations.
These new schemes involved modifications to the initial steps and weights of the numerical methods proposed in \cite{gao2014new} and \cite{cao2015high}, enabling them to capture the singularities inherent in the solution of the problem. 
The error estimates for these newly proposed algorithms were established using the Laplace transform method. 
The authors of \cite{li2018finite} propose an implicit-explicit scheme that combines the fractional trapezoidal method for time discretization with a fast solver for spatial discretization, aiming to efficiently solve two-dimensional nonlinear TFSDEs. 
They conduct a thorough investigation into the stability and convergence properties of their  proposed scheme.
The authors of \cite{hendy2019novel} utilize the 
L2-1$_\sigma$ approximation formula to introduce a second-order finite difference method for solving nonlinear TFSDE with multiple delays.

In the specific case where model \eqref{eq1}, exclusively reduces to a TFDWEs, these models find widespread application in various domains, including fluid flow, continuous-time finance, unification of diffusion and wave propagation phenomena, fractional random walks, oil strata, acoustic and mechanical responses, electrodynamics, and more
\cite{huang2008time,agrawal2002solution}.
%Researchers have extensively investigated numerical techniques specifically developed for addressing TFDWEs. 
Two high-order compact finite difference schemes based on the Crank-Nicolson method to approximate a modified diffusion equation are developed in
\cite{alikhanov2021crank}.
The authors provide a priori estimates for the problem and demonstrate that the presented schemes achieve an order of accuracy of 2 and $2-\alpha$,
$(0<\alpha<1)$, in the temporal direction.
A discrete scheme was proposed by Sun and Wu to approximate TFDWEs in 
\cite{sun2006fully}.
They have employed the energy method to establish the stability and convergence rate of their proposed scheme.
In \cite{jin2016two}, the authors developed robust fully discrete schemes for the TFSDEs and TFDWEs.
% employing a Galerkin finite element method in space and convolution quadrature generated by the backward Euler method and second-order backward difference in time. 
They conducted a comprehensive error analysis and derived optimal error estimates for both smooth and nonsmooth initial data.
Zhang and Wang \cite{zhang2023numerical} employed the L1 formula to develop a compact finite difference scheme for solving the TFDWEs with time delay. They established the convergence and stability of the difference scheme and demonstrated that their method has an order of accuracy of $3-\alpha$,
$(1<\alpha<2)$.
Zhang et al. \cite{zhang2023local} applied a reduction method to convert the TFDWEs into a coupled system. They then constructed a fully discrete scheme for solving the TFDWEs by utilizing the L1 formula on graded meshe in the temporal direction and the compact difference scheme in the spatial direction.
In \cite{maurya2023high},
 an adaptive stable implicit difference scheme on a non-uniform mesh is constructed for solving TFDWEs. The scheme incorporates a new high-order approximation for the Caputo fractional derivative. The authors discuss the unconditional stability, and convergence of the proposed method.

Considering the exceptional modeling capability of the model \eqref{eq1}, there has been considerable interest within the research community to develop accurate numerical algorithms for its solution.
The authors of \cite{ezz2020numerical} employed shifted Legendre polynomials and   spectral collocation method to propose a numerical algorithm for solving the two-dimensional time-fractional mixed SDDWE with multi-term fractional derivatives.
In  \cite{feng2019finite}, the mixed L schemes and the finite element technique are employed to solve a 
the two-dimensional time-fractional mixed multi-term SDDWE that involves a time-space coupled derivative.
Sun et al. \cite{sun2020new},
 proposed an analytical technique  for investigating a finite difference method that relies  on the L1 formula to solve the 
 the time-fractional mixed SDDWE.
 The numerical approaches provided in \cite{feng2019finite} and  \cite{sun2020new}
  have an order of accuracy that is less than two and is dependent on the order of fractional derivatives.
In order to address the time-fractional mixed models, Ding initially developed two distinct second-order difference analogs to approximate the Caputo derivative for orders $0 < \alpha < 1$ and $1 < \beta < 2$.
 These difference analogs were then utilized to establish a temporal second-order method for solving the time-fractional mixed SDDWE.
  Using the energy method, the authors demonstrated that the stability and convergence of the difference scheme are guaranteed only for 
  $ \beta \in ( 1,\frac{-1+\sqrt{17} }{2}]$.
Du and Sun \cite{Du2021} employed the method of order reduction to transform the  time-fractional mixed multi-term SDDWE model into a new model that incorporates fractional integral and sub-diffusion terms.
 Subsequently, they employed the L2-1$_\sigma$ formula to solve the resulting  problem. 
 The authors demonstrated that their method exhibits second-order convergence in both the temporal and spatial directions.

To summarize, the key innovative aspects of this research can be stated as follows:
\begin{itemize}
  \item 
  The  time-fractional mixed SDDWE problem is transformed
into a new model which includes the fractional 
derivatives and integrals with orders
belonging to (0, 1).
  \item 
  An a priori estimate of the solution is presented.
  \item 
  A temporal second-order L2-type  difference method to approximate
the time-fractional mixed SDDWE is constructed.
  \item 
  The unconditional stability of the proposed difference scheme is investigated.
  \item 
  The  potential of this study to construct and analyze a temporal second-order
 numerical scheme for a more general form of the time-fractional mixed SDDWE
with multi-term time-fractional derivatives is demonstrated.
\end{itemize}

%%%%%%%%%%%%%%% Section 2 %%%%%%%%%%%%%%%%%%%%%%%
 \section{Methodology} \label{sec:2}

Before the methodology for solving model \eqref{eq1} is presented, 
the definition of the Riemann-Liouville fractional integration operator  is recalled, which is given by 
\begin{align} \label{rla}
D_{0t}^{-\alpha}u(\mbox{\boldmath${x}$}, t)=\int\limits_{0}^{t}\omega_{\alpha}^{(t-\xi)} u(\mbox{\boldmath${x}$}, \xi) d\xi
.
\end{align}
Applying  
 the Riemann-Liouville integral of order $\alpha$ in
 \eqref{rla},
to the both sides of the model \eqref{eq1}
we reach
\begin{equation}
	\frac{\partial}{\partial t}u(\mbox{\boldmath${x}$}, t) + \varkappa D_{0t}^{-\alpha}\partial_{0t}^{\beta}u(\mbox{\boldmath${x}$}, t) + D_{0t}^{-\alpha}\mathcal{A}u(\mbox{\boldmath${x}$},t) = D_{0t}^{-\alpha}g(\mbox{\boldmath${x}$}, t) + \psi(\mbox{\boldmath${x}$})
,
	\label{eq4_1}
\end{equation}
in which 
$(\mbox{\boldmath${x}$}, t) \in \Omega\times(0,T].$
The operator $D_{0t}^{-\alpha}\partial_{0t}^{\beta}$,
 in  \eqref{eq4_1} is evaluated using the following calculation 
\begin{align*}%\label{Dab}
\nonumber
D_{0t}^{-\alpha}\partial_{0t}^{\beta} u(\mbox{\boldmath${x}$}, t) = 
&
 \int\limits_{0}^{t}\omega_{\alpha}^{(t-\eta)}
\int\limits_{0}^{\eta} \omega_{1-\beta}^{(\eta-\xi)}\frac{\partial u(\mbox{\boldmath${x}$}, \xi)}{\partial \xi} d\xi
\\ \nonumber 
= & 
\int\limits_{0}^{t}\frac{\partial u(\mbox{\boldmath${x}$}, \xi)}{\partial \xi} d\xi
\int\limits_{\xi}^{t} \omega_{\alpha}^{(t-\eta)}  \omega_{1-\beta}^{(\eta-\xi)} d\eta 
= 
\int\limits_{0}^{t}\omega_{1-\beta+\alpha}^{(t-\xi)}\frac{\partial u(\mbox{\boldmath${x}$}, \xi)}{\partial \xi} d\xi
.
\end{align*}
In this way, 
the operator $D_{0t}^{-\alpha}\partial_{0t}^{\beta}$
 can be categorized into the following three cases based on the sign of 
 $ \alpha - \beta$, 
\begin{align}
\label{Dab}
D_{0t}^{-\alpha}\partial_{0t}^{\beta}u(\mbox{\boldmath${x}$}, t) =
\begin{cases}
	D_{0t}^{-(\alpha-\beta)}u(\mbox{\boldmath${x}$}, t) -
\omega_{1-\beta+\alpha}^{(t)}
% \frac{t^{\alpha-\beta}}{\Gamma(\alpha-\beta+1)}
\phi(\mbox{\boldmath${x}$}),       &\text{if}\quad \alpha-\beta >0,\\
	u(\mbox{\boldmath${x}$}, t) - \phi(\mbox{\boldmath${x}$}),   &\text{if} \quad \alpha-\beta = 0,\\
	\partial_{0t}^{\beta-\alpha}u(\mbox{\boldmath${x}$}, t),      & \text{if} \quad \alpha-\beta < 0.
\end{cases}
\end{align}
By 
considering Eqs. \eqref{eq4_1} and \eqref{Dab}
and defining $\gamma=\beta-\alpha$, 
the time-fractional mixed SDDWE model
\eqref{eq1}
can be classified into the following  three cases
based on the sign of $\gamma$,
for
$(\mbox{\boldmath${x}$}, t) \in \Omega\times(0,T]$,
%------------------------------------------------------------------------------
%%%
%------------------------------------------------------------------------------
\begin{enumerate}
  \item\label{eq5_1}  $-1<\gamma<0$:
\begin{align}\label{c11}
\frac{\partial}{\partial t}u(\mbox{\boldmath${x}$}, t) + \varkappa D_{0t}^{\gamma}u(\mbox{\boldmath${x}$}, t) + D_{0t}^{-\alpha}\mathcal{A}u(\mbox{\boldmath${x}$},t) = f(\mbox{\boldmath${x}$},t),
\end{align}
  with
\begin{align*}
f(\mbox{\boldmath${x}$},t) = D_{0t}^{-\alpha}g(\mbox{\boldmath${x}$}, t) 
+ \frac{\varkappa \phi(\mbox{\boldmath${x}$}) t^{-\gamma}}{\Gamma(1-\gamma)} + \psi(\mbox{\boldmath${x}$}).
 \end{align*}
 \item \label{eq5_2} $\gamma = 0$: 
\begin{align}\label{c22}
		\frac{\partial}{\partial t}u(\mbox{\boldmath${x}$}, t) + \varkappa u(\mbox{\boldmath${x}$}, t) + D_{0t}^{-\alpha}\mathcal{A}u(\mbox{\boldmath${x}$},t) = f(\mbox{\boldmath${x}$},t), 
\end{align}
  with
\begin{align*}
f(\mbox{\boldmath${x}$},t) = D_{0t}^{-\alpha}g(\mbox{\boldmath${x}$}, t) + \varkappa\phi(\mbox{\boldmath${x}$}) +  \psi(\mbox{\boldmath${x}$}).
\end{align*}
    \item   \label{eq5_3} $0<\gamma<1$:
\begin{align}\label{c33}
		\frac{\partial}{\partial t}u(\mbox{\boldmath${x}$}, t) + \varkappa \partial_{0t}^{\gamma}u(\mbox{\boldmath${x}$}, t) + D_{0t}^{-\alpha}\mathcal{A}u(\mbox{\boldmath${x}$},t) = f(\mbox{\boldmath${x}$}, t),
\end{align}
with 
\begin{align*}
f(\mbox{\boldmath${x}$},t) = D_{0t}^{-\alpha}g(\mbox{\boldmath${x}$}, t) +  \psi(\mbox{\boldmath${x}$}).
\end{align*}
\end{enumerate}
In Eqs. \eqref{c11}--\eqref{c33}, as $t$ approaches to 0, 
we have
$u_t(\mbox{\boldmath${x}$}, 0) = \psi(\mbox{\boldmath${x}$})$. 
In this way, the second initial condition in \eqref{eq3} can be derived.

By combining equations \eqref{c11}--\eqref{c33}, 
the initial-boundary value problem \eqref{eq1}--\eqref{eq3}
 is transformed into the following general 
 initial-boundary value problem
\begin{align}\label{eq6_1}
&
	\frac{\partial}{\partial t}u(\mbox{\boldmath${x}$}, t) + \varkappa_1 \partial_{0t}^{\gamma}u(\mbox{\boldmath${x}$}, t) + \varkappa_2 u(\mbox{\boldmath${x}$}, t) + \varkappa_3 D_{0t}^{-\delta}u(\mbox{\boldmath${x}$}, t) + D_{0t}^{-\alpha}\mathcal{A}u(\mbox{\boldmath${x}$},t) = f(\mbox{\boldmath${x}$},t), 
\\&
	u(\mbox{\boldmath${x}$},t) = 0, \quad \mbox{\boldmath${x}$}\in \partial \Omega,
	\label{eq6_2}
\\&
	u(\mbox{\boldmath${x}$}, 0) = \phi(x), \quad \mbox{\boldmath${x}$}\in \overline \Omega
.
\label{eq6_3}
\end{align}
In which
\begin{align*}
(\mbox{\boldmath${x}$}, t) \in \Omega\times(0,T],
\quad
0<\gamma<1,
\quad
0<\delta<1,
 \quad
\varkappa_i\geq0,~ i=1,2,3. 
\end{align*}
The primary focus of this study is to design and analyze
 a second-order difference scheme for the the initial-boundary value model 
\eqref{eq6_1}--\eqref{eq6_3}.

\subsection{\bf A priori estimate for a differential problem} \label{subsec:4.1}

The uniqueness and the continuous dependence of the solution to the model  \eqref{eq6_1}--\eqref{eq6_3} on the input data
can be inferred  from the a priori estimate given below.
%follow from the following  a priori estimate.
%----------------------------------------------------------------------
%
%---------------------------------------------------------------------------------
\begin{theorem}\label{theorem_estim}
	The solution $u(\mbox{\boldmath${x}$}, t) $ of the problem \eqref{eq6_1}--\eqref{eq6_3} satisfies the following  a   priori
estimate
\begin{align}
\|u(t)\|^2 + \varkappa_1 D_{0t}^{\gamma-1}\|u(t)\|^2 + 2\varkappa_2 \int\limits_{0}^{t}\|u(s)\|^2ds 
 \leq 
C_1 \left(\|u_0\|^2 + \int\limits_{0}^{t}\|f(s)\|^2ds\right), 
\label{eq7_7}  
\end{align}
where $C_1=\max\left\{2+\varkappa_1 T^{1-\gamma}(3-\gamma)/\Gamma(3-\gamma),\, 4T\right\}$.
\end{theorem}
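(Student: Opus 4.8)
The plan is to use the energy method in the Hilbert space $\mathcal{H}=L_2(\Omega)$, writing $(\cdot,\cdot)$ for its inner product and suppressing the spatial variable. First I would take the inner product of \eqref{eq6_1} with $u(t)$, turning the leading term into $\tfrac12\frac{d}{dt}\|u\|^2$ and the zeroth-order term into $\varkappa_2\|u\|^2$, to obtain the pointwise identity
\[
\tfrac12\tfrac{d}{dt}\|u\|^2+\varkappa_1(\partial_{0t}^{\gamma}u,u)+\varkappa_2\|u\|^2+\varkappa_3(D_{0t}^{-\delta}u,u)+(D_{0t}^{-\alpha}\mathcal{A}u,u)=(f,u).
\]
Three structural tools then drive the argument: (i) the well-known inequality $(\partial_{0t}^{\gamma}u,u)\ge\tfrac12\,\partial_{0t}^{\gamma}\|u\|^2$ for $0<\gamma<1$; (ii) the positivity of the Riemann--Liouville (Abel) operator, $\int_0^t(D_{0s}^{-\delta}u,u)\,ds\ge0$; and (iii) the analogous nonnegativity $\int_0^t(D_{0s}^{-\alpha}\mathcal{A}u,u)\,ds\ge0$. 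For (iii) I would expand $u$ in the orthonormal eigenbasis $\{\varphi_k\}$ of the self-adjoint positive operator $\mathcal{A}$, with $\mathcal{A}\varphi_k=\lambda_k\varphi_k$ and $\lambda_k\ge\varkappa_{\mathcal{A}}>0$; since $D_{0s}^{-\alpha}$ acts in time and commutes with $\mathcal{A}$, the term decouples into $\sum_k\lambda_k\int_0^t(D_{0s}^{-\alpha}c_k)c_k\,ds$, reducing (iii) to the scalar positivity used in (ii).

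Next I would integrate the identity over $(0,t)$. The leading term gives $\tfrac12\|u(t)\|^2-\tfrac12\|u_0\|^2$, while the Caputo term is rewritten by the semigroup property of the fractional integrals: since $\partial_{0s}^{\gamma}w=D_{0s}^{-(1-\gamma)}w'$ with $w=\|u\|^2$, one has
\[
\int_0^t\partial_{0s}^{\gamma}\|u(s)\|^2\,ds=D_{0t}^{\gamma-1}\|u(t)\|^2-\|u_0\|^2\,\frac{t^{1-\gamma}}{\Gamma(2-\gamma)}.
\]
Applying (i), discarding the two nonnegative contributions from (ii) and (iii), and multiplying by two produces the key inequality
\[
\|u(t)\|^2+\varkappa_1 D_{0t}^{\gamma-1}\|u(t)\|^2+2\varkappa_2\int_0^t\|u(s)\|^2\,ds\le\|u_0\|^2+\varkappa_1\|u_0\|^2\,\frac{t^{1-\gamma}}{\Gamma(2-\gamma)}+2\int_0^t(f,u)\,ds,
\]
whose left-hand side is exactly that of \eqref{eq7_7}.

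It then remains to absorb the forcing term without invoking Gr\"onwall, so that $C_1$ stays explicit. Writing $M=\max_{0\le s\le t}\|u(s)\|$, I would use Cauchy--Schwarz to get $2\int_0^t(f,u)\,ds\le 2M\sqrt{T}\,\bigl(\int_0^t\|f\|^2\,ds\bigr)^{1/2}$. Evaluating the key inequality at the time where the maximum is attained and dropping the nonnegative fractional and integral terms on the left leaves a quadratic inequality of the form $\tfrac12M^2\le c_0\|u_0\|^2+M\sqrt{T}\bigl(\int_0^t\|f\|^2ds\bigr)^{1/2}$; solving it in $M$ and applying $2ab\le a^2+b^2$ bounds $M^2$ by a multiple of $\|u_0\|^2$ plus $4T\int_0^t\|f\|^2ds$. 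Substituting this back into the key inequality and using $t^{1-\gamma}\le T^{1-\gamma}$ then delivers \eqref{eq7_7} with the stated constant $C_1$.

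The main obstacle is item (iii): the pointwise quantity $(D_{0s}^{-\alpha}\mathcal{A}u,u)$ is not sign-definite, so positivity can be recovered only \emph{after} integration in time, and it rests on the fact that the Abel kernel is positive definite for $0<\alpha<1$. Getting the order of operations right --- integrating in time before exploiting positivity, and decoupling through the spectral decomposition of $\mathcal{A}$ --- is the delicate step. By comparison, the conversion of the Caputo term to the $D_{0t}^{\gamma-1}$ form and the final maximum-over-time bookkeeping that fixes $C_1$ are routine once these three tools are established.
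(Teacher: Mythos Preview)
Your argument is correct in all the structural points and uses essentially the same energy tools as the paper: the inequality $(u,\partial_{0t}^{\gamma}u)\ge\tfrac12\partial_{0t}^{\gamma}\|u\|^2$ (the paper's Lemma~\ref{lem_eneq_C_D}), the positive-definiteness of the Abel kernel after time integration (the paper's Lemma~\ref{lem_positive}/Corollary~\ref{cor_1}), and the same rewriting of the integrated Caputo term as $D_{0t}^{\gamma-1}\|u\|^2-\omega_{2-\gamma}^{(t)}\|u_0\|^2$. Your spectral decoupling for item (iii) is a perfectly valid alternative to the paper's direct appeal to Lemma~\ref{lem_positive}; either route reduces the elliptic term to the scalar Abel positivity.

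The genuine difference is in how the forcing term is absorbed. The paper applies Young's inequality $(u,f)\le\varepsilon\|u\|^2+\tfrac{1}{4\varepsilon}\|f\|^2$, obtains the intermediate bound~\eqref{eq7_9}, and then integrates~\eqref{eq7_9} once more in $t$ to control $\int_0^t\|u(s)\|^2\,ds$ (using $\int_0^t\int_0^\xi\|u\|^2\,ds\,d\xi\le T\int_0^t\|u\|^2\,ds$), finally setting $\varepsilon=1/(4T)$. Your route instead passes through the temporal supremum $M=\max_{[0,t]}\|u(s)\|$ and a quadratic inequality in $M$. Both close the estimate without Gr\"onwall and yield explicit constants; the paper's double-integration is what produces the $\omega_{3-\gamma}^{(T)}$ factor and hence the specific coefficient $(3-\gamma)/\Gamma(3-\gamma)$.

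One small caveat: your closing step does not quite recover the \emph{stated} $C_1$. Carrying your bookkeeping through gives $\text{LHS}\le 2\bigl(1+\varkappa_1 T^{1-\gamma}/\Gamma(2-\gamma)\bigr)\|u_0\|^2+4T\int_0^t\|f\|^2\,ds$, i.e.\ the coefficient on $\|u_0\|^2$ is $2+2\varkappa_1 T^{1-\gamma}/\Gamma(2-\gamma)$, whereas the paper's is $2+\varkappa_1 T^{1-\gamma}(3-\gamma)/\Gamma(3-\gamma)$. Since $2/\Gamma(2-\gamma)=2(2-\gamma)/\Gamma(3-\gamma)\ge(3-\gamma)/\Gamma(3-\gamma)$ for $0<\gamma<1$, your constant is slightly larger (by a factor at most $4/3$ in the $\varkappa_1$ part). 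So you prove the a~priori estimate, but with a marginally weaker explicit constant than the one in the theorem.
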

%---------------------------------------------------------------------------------
%
%---------------------------------------------------------------------------------
In order to establish the validity of Theorem \eqref{theorem_estim}, some auxiliary lemmas and results are required
\begin{lemma}\cite{vabishchevich2022numerical} \label{lem_positive}
	Assume that
$\mathcal{K}(t)\in L_{1}(0,T)$ is real-valued function and for $t>0$ satisfies the following properties 
\begin{align*}
\mathcal{K}(t) \geq 0, \quad
\frac{d }{dt}\mathcal{K}(t)\leq 0, \quad
\frac{d^2 }{dt^2}\mathcal{K}(t) \geq 0.
\end{align*}
Then it follows that the following inequality holds true
	\begin{equation}
	\int\limits_{0}^{t} u(s)\int\limits_{0}^{s}\mathcal{K}(s-\eta)u{(\eta)}d\eta ds\geq 0, \quad v(t)\in\mathcal{C}[0,T].
    \label{eq7_2}  
    \end{equation}
\end{lemma}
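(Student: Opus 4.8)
My plan is to reduce the stated inequality to the positive semidefiniteness of the symmetric convolution kernel $\mathcal{K}(|\cdot|)$, and then to exploit the convexity hypothesis $\mathcal{K}''\ge 0$ to exhibit $\mathcal{K}$ as a nonnegative superposition of triangle kernels, each of which is manifestly of positive type. First I would symmetrize: writing $u$ for the test function in \eqref{eq7_2} (the $v$ appearing in the closing condition is a typo for $u$, which I take to be continuous on $[0,T]$) and relabelling the dummy variables, one obtains
\begin{align*}
\int_0^t u(s)\int_0^s \mathcal{K}(s-\eta)\,u(\eta)\,d\eta\,ds
=\frac{1}{2}\int_0^t\int_0^t \mathcal{K}(|s-\eta|)\,u(s)u(\eta)\,d\eta\,ds,
\end{align*}
since the two triangular halves of the square $[0,t]^2$ contribute equally after exchanging $s\leftrightarrow\eta$. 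It therefore suffices to show that $(s,\eta)\mapsto\mathcal{K}(|s-\eta|)$ is positive semidefinite on $[0,t]$.

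Next I would use convexity. Because $\mathcal{K}\ge0$, $\mathcal{K}'\le0$ and $\mathcal{K}''\ge0$ with $\mathcal{K}\in L_{1}(0,T)$, the kernel decreases to $0$ and an integration by parts gives the representation
\begin{align*}
\mathcal{K}(\tau)=\int_0^\infty (r-\tau)_+\,d\mu(r),\qquad \tau\ge0,
\end{align*}
where $d\mu(r)=\mathcal{K}''(r)\,dr$ is a nonnegative measure and the boundary contributions vanish by the sign and integrability assumptions. Substituting $\mathcal{K}(|s-\eta|)=\int_0^\infty (r-|s-\eta|)_+\,d\mu(r)$ and interchanging the order of integration by Tonelli yields
\begin{align*}
\int_0^t u(s)\int_0^s \mathcal{K}(s-\eta)u(\eta)\,d\eta\,ds
=\frac{1}{2}\int_0^\infty\Big(\int_0^t\int_0^t (r-|s-\eta|)_+\,u(s)u(\eta)\,d\eta\,ds\Big)d\mu(r).
\end{align*}
Each triangle kernel is an autocorrelation, since $(r-|s-\eta|)_+=\int_{\mathbb{R}}\mathbf{1}_{[0,r]}(z-s)\,\mathbf{1}_{[0,r]}(z-\eta)\,dz$, so the bracketed double integral equals $\int_{\mathbb{R}}\big(\int_0^t \mathbf{1}_{[0,r]}(z-s)u(s)\,ds\big)^2 dz\ge0$. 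As $d\mu\ge0$, the entire right-hand side is nonnegative, which is precisely \eqref{eq7_2}.

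The step I expect to require the most care is the integral representation of $\mathcal{K}$ together with the Tonelli interchange under only the bare sign conditions on the derivatives. If $\mathcal{K}$ is not classically twice differentiable, one must read $\mathcal{K}''$ as the nonnegative distributional second derivative (a Radon--Stieltjes measure) and justify the identity by approximating $\mathcal{K}$ by smooth kernels $\mathcal{K}_\varepsilon$ retaining all three monotonicity and convexity properties, then passing to the limit; the decay needed to annihilate the boundary terms at infinity must likewise be extracted from $\mathcal{K}\in L_{1}$. An essentially equivalent spectral route is available: after the same symmetrization, \eqref{eq7_2} is equivalent to $\widehat{\mathcal{K}(|\cdot|)}\ge0$, which is the P\'olya-type criterion asserting that the cosine transform of a nonnegative, nonincreasing, convex function is nonnegative, whence Bochner's theorem delivers positive definiteness. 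Since the statement is quoted from \cite{vabishchevich2022numerical}, one may alternatively invoke it directly, but the argument above renders it self-contained.
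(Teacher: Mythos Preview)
The paper does not supply its own proof of this lemma: it is quoted verbatim from \cite{vabishchevich2022numerical} and used as a black box in the proof of Corollary~\ref{cor_1} and Theorem~\ref{theorem_estim}. Your self-contained argument---symmetrise to $\tfrac12\iint \mathcal{K}(|s-\eta|)u(s)u(\eta)$, represent the convex kernel as a nonnegative mixture of triangle kernels, and recognise each triangle kernel as an autocorrelation---is the standard and correct route to the result.

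One technical point deserves tightening. You invoke $\mathcal{K}(\tau)=\int_0^\infty (r-\tau)_+\,d\mu(r)$ by sending the boundary terms to zero at infinity, but the hypothesis is only $\mathcal{K}\in L_1(0,T)$, not $L_1(0,\infty)$, so there is no decay at infinity to exploit (indeed $\mathcal{K}$ need not be defined beyond $T$). The clean fix is to work on the finite interval: for $\tau\in[0,t]\subset[0,T]$ the Taylor identity
\[
\mathcal{K}(\tau)=\mathcal{K}(T)+\bigl(-\mathcal{K}'(T)\bigr)(T-\tau)+\int_0^T (r-\tau)_+\,\mathcal{K}''(r)\,dr
\]
decomposes $\mathcal{K}(|s-\eta|)$ into a nonnegative constant (giving $\mathcal{K}(T)\bigl(\int_0^t u\bigr)^2\ge0$), a nonnegative multiple of a single triangle kernel, and a $\mathcal{K}''$-weighted superposition of triangle kernels, each term positive semidefinite exactly as you argue. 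With this modification your proof is complete and more informative than the bare citation in the paper.
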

%---------------------------------------------------------------------------------
%
%---------------------------------------------------------------------------------
\begin{lemma}\cite{alikhanov2010priori} \label{lem_eneq_C_D}
For any absolutely continuous function $u(t)$ on the interval $[0, T]$, the following inequality holds.
    $$
    u(t)\partial_{0t}^{\nu}u(t)\geq \frac{1}{2}\partial_{0t}^{\nu}u^2(t),
    \quad
    0<\nu<1.
    $$
\end{lemma}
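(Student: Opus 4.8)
The plan is to prove the pointwise inequality directly from the integral definition of the Caputo derivative, reducing the difference of its two sides to a quantity that is visibly nonnegative. Since $0<\nu<1$ gives $\partial_{0t}^{\nu}u(t)=\int_{0}^{t}\omega_{1-\nu}^{(t-\xi)}u'(\xi)\,d\xi$ with $\omega_{1-\nu}^{(t-\xi)}=(t-\xi)^{-\nu}/\Gamma(1-\nu)$, and since absolute continuity of $u$ implies absolute continuity of $u^{2}$ with $(u^{2})'=2uu'$ almost everywhere, substituting into $\tfrac12\partial_{0t}^{\nu}u^{2}$ and subtracting merges the two convolution integrals into
\[
u(t)\partial_{0t}^{\nu}u(t)-\tfrac12\partial_{0t}^{\nu}u^{2}(t)=\frac{1}{\Gamma(1-\nu)}\int_{0}^{t}(t-\xi)^{-\nu}\bigl[u(t)-u(\xi)\bigr]u'(\xi)\,d\xi .
\]
The entire task is therefore to show that this single integral is nonnegative.

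First I would fix $t$ and set $w(\xi):=u(t)-u(\xi)$, so that $w(t)=0$, $w'(\xi)=-u'(\xi)$, and the integrand becomes $-(t-\xi)^{-\nu}w(\xi)w'(\xi)=-\tfrac12(t-\xi)^{-\nu}\frac{d}{d\xi}w^{2}(\xi)$. Integrating by parts to move the derivative from $w^{2}$ onto the kernel, and using $\frac{d}{d\xi}(t-\xi)^{-\nu}=\nu(t-\xi)^{-\nu-1}$, I expect to arrive at the identity
\[
u(t)\partial_{0t}^{\nu}u(t)-\tfrac12\partial_{0t}^{\nu}u^{2}(t)=\frac{1}{2\Gamma(1-\nu)}\left[t^{-\nu}\bigl(u(t)-u(0)\bigr)^{2}+\nu\int_{0}^{t}(t-\xi)^{-\nu-1}\bigl(u(t)-u(\xi)\bigr)^{2}\,d\xi\right].
\]
Because $\Gamma(1-\nu)>0$ and each of $t^{-\nu}$, $\nu$, $(t-\xi)^{-\nu-1}$, and the two squares is nonnegative, the right-hand side is $\ge 0$, which is exactly the assertion of the lemma.

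The one delicate point, which I regard as the main obstacle, is that the kernel $(t-\xi)^{-\nu}$ is singular at the upper endpoint $\xi=t$, so the integration by parts and the boundary term $\bigl[(t-\xi)^{-\nu}w^{2}(\xi)\bigr]_{0}^{t}$ cannot be handled naively. To justify it I would integrate by parts on the truncated interval $[0,t-\epsilon]$, where the kernel is bounded and continuously differentiable, and then let $\epsilon\downarrow0$. The lower endpoint yields the finite term $t^{-\nu}w^{2}(0)$, while the contribution at $\xi=t-\epsilon$ equals $\epsilon^{-\nu}w^{2}(t-\epsilon)$ with $w(t-\epsilon)=\int_{t-\epsilon}^{t}u'(s)\,ds$. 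Bounding $\bigl|\int_{t-\epsilon}^{t}u'\bigr|\le\int_{t-\epsilon}^{t}|u'|\le\epsilon^{\nu}\int_{t-\epsilon}^{t}(t-s)^{-\nu}|u'(s)|\,ds=:\epsilon^{\nu}A(\epsilon)$, where $A(\epsilon)\to0$ because the integral defining $\partial_{0t}^{\nu}u$ converges absolutely, gives $\epsilon^{-\nu}w^{2}(t-\epsilon)\le\epsilon^{\nu}A(\epsilon)^{2}\to0$, so the endpoint singularity leaves no residue. The remaining integral over $[0,t-\epsilon]$ has a nonnegative integrand and increases as $\epsilon\downarrow0$, so by monotone convergence it tends to the integral in the displayed identity; passing to the limit establishes the identity, and the nonnegativity of every surviving term then completes the proof. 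For $u\in C^{1}[0,T]$ this limiting step is immediate since $w^{2}(\xi)=O\bigl((t-\xi)^{2}\bigr)$ near $\xi=t$, and the general absolutely continuous case is recovered precisely by the truncation estimate above.
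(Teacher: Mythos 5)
Your proof is correct; note that the paper itself states this lemma without proof, importing it from \cite{alikhanov2010priori}, and your argument --- rewriting $u(t)\partial_{0t}^{\nu}u(t)-\tfrac12\partial_{0t}^{\nu}u^{2}(t)$ as the convolution of the kernel with $\bigl(u(t)-u(\xi)\bigr)u'(\xi)$, integrating by parts on the truncated interval $[0,t-\epsilon]$, and passing to the limit to obtain the nonnegative identity with the terms $t^{-\nu}\bigl(u(t)-u(0)\bigr)^{2}$ and $\nu\int_{0}^{t}(t-\xi)^{-\nu-1}\bigl(u(t)-u(\xi)\bigr)^{2}d\xi$ --- is essentially the classical proof given in that cited reference. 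Your truncation estimate $\epsilon^{-\nu}w^{2}(t-\epsilon)\le\epsilon^{\nu}A(\epsilon)^{2}\to0$, valid at every $t$ where the integral defining $\partial_{0t}^{\nu}u(t)$ converges absolutely (which is all one can ask for merely absolutely continuous $u$), correctly disposes of the endpoint singularity, which is indeed the only delicate step.
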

%----------------------------------------------------------------------
%
%---------------------------------------------------------------------------------
\begin{corollary}\label{cor_1}
For any absolutely continuous function $u(t)$ on the interval $[0, T]$, the following inequalities  hold.
	\begin{align}\label{eq7_4}  
		\int\limits_{0}^{t}u(s)\partial_{0s}^{\gamma}u(s)ds \geq \frac{1}{2}D_{0t}^{\gamma-1}u^2(t) - 
\frac{1}{2} \omega_{2-\gamma}^{(t)} u^2(0), 
\quad
 0<\gamma<1
 ,
	\end{align}
	\begin{align}
		\int\limits_{0}^{t}u(s)D_{0s}^{-\delta}u(s)ds \geq 0, 
\quad
 0<\delta<1.
%\label{eq7_5}  
	\end{align}
\end{corollary}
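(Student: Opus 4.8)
The plan is to treat the two inequalities in Corollary~\ref{cor_1} separately, using the two preceding lemmas as the respective starting points.

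For the first inequality, I would begin by applying Lemma~\ref{lem_eneq_C_D} with $\nu=\gamma$ pointwise in $s$, which gives $u(s)\partial_{0s}^{\gamma}u(s)\geq \tfrac12\partial_{0s}^{\gamma}u^2(s)$, and then integrate this over $[0,t]$. The task thus reduces to evaluating $\int_0^t \partial_{0s}^{\gamma}u^2(s)\,ds$ and identifying it with $D_{0t}^{\gamma-1}u^2(t)-\omega_{2-\gamma}^{(t)}u^2(0)$. To carry out that evaluation I would set $v=u^2$ and write the Caputo derivative explicitly as $\partial_{0s}^{\gamma}v(s)=\int_0^s\omega_{1-\gamma}^{(s-\xi)}v'(\xi)\,d\xi$. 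Interchanging the order of integration (Fubini) in $\int_0^t\int_0^s\omega_{1-\gamma}^{(s-\xi)}v'(\xi)\,d\xi\,ds$ and performing the inner $s$-integral via $\int_\xi^t\omega_{1-\gamma}^{(s-\xi)}\,ds=\omega_{2-\gamma}^{(t-\xi)}$ yields $\int_0^t\omega_{2-\gamma}^{(t-\xi)}v'(\xi)\,d\xi$.

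An integration by parts then converts this into the desired Riemann--Liouville form. The key algebraic fact is $\tfrac{d}{d\xi}\omega_{2-\gamma}^{(t-\xi)}=-\omega_{1-\gamma}^{(t-\xi)}$, so the surviving integral becomes $\int_0^t\omega_{1-\gamma}^{(t-\xi)}v(\xi)\,d\xi=D_{0t}^{\gamma-1}v(t)$, while the boundary contribution collapses to $-\omega_{2-\gamma}^{(t)}v(0)$. This last step is where care is needed: one must check that the boundary term at $\xi=t$ vanishes, which holds because $2-\gamma>1$ forces $\omega_{2-\gamma}^{(0)}=0$. Substituting $v(0)=u^2(0)$ and restoring the factor $\tfrac12$ produces the stated estimate. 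I expect this identity --- the combination of Fubini, the inner-integral evaluation, and the vanishing boundary term --- to be the main obstacle, the remainder being routine.

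For the second inequality, I would invoke Lemma~\ref{lem_positive} directly with the kernel $\mathcal{K}(t)=\omega_{\delta}^{(t)}=t^{\delta-1}/\Gamma(\delta)$, since $D_{0s}^{-\delta}u(s)=\int_0^s\omega_{\delta}^{(s-\eta)}u(\eta)\,d\eta$ puts $\int_0^t u(s)D_{0s}^{-\delta}u(s)\,ds$ exactly into the form appearing in \eqref{eq7_2}. It then remains to verify the hypotheses of that lemma for $0<\delta<1$: integrability $\mathcal{K}\in L_1(0,T)$ (from $\delta>0$), nonnegativity $\mathcal{K}(t)\geq0$, monotonicity $\mathcal{K}'(t)=(\delta-1)t^{\delta-2}/\Gamma(\delta)\leq0$ (since $\delta-1<0$), and convexity $\mathcal{K}''(t)=(\delta-1)(\delta-2)t^{\delta-3}/\Gamma(\delta)\geq0$ (since $(\delta-1)(\delta-2)>0$). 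With these checked, Lemma~\ref{lem_positive} delivers the nonnegativity at once.
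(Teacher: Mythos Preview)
Your proposal is correct and follows precisely the route the paper intends: the corollary is stated without an explicit proof, but its placement immediately after Lemma~\ref{lem_positive} and Lemma~\ref{lem_eneq_C_D} makes clear that the first inequality is meant to follow from Lemma~\ref{lem_eneq_C_D} together with the identity $\int_0^t\partial_{0s}^{\gamma}v(s)\,ds=D_{0t}^{\gamma-1}v(t)-\omega_{2-\gamma}^{(t)}v(0)$, and the second from Lemma~\ref{lem_positive} applied to $\mathcal{K}=\omega_\delta$. Your Fubini/integration-by-parts computation of that identity and your verification of the kernel hypotheses are exactly what is needed to fill in the details the paper omits.
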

%----------------------------------------------------------------------
%
%---------------------------------------------------------------------------------
\begin{proof}[Proof of Theorem \ref{theorem_estim}]
In the space $\mathcal{H}$,
taking inner-product of equation \eqref{eq6_1} with
$u(\mbox{\boldmath${x}$}, t)$,
then changing the variable $t$ to $s$ 
and integrating over the time variable from
 $0$ to $t$, we get
	\begin{align}
\nonumber
\int\limits_{0}^{t}(u, \frac{\partial}{\partial t}u)ds+&
\varkappa_1\int\limits_{0}^{t}(u,  \partial_{0 s}^{\gamma}u)ds + \varkappa_2\int\limits_{0}^{t}(u, u)ds+ \varkappa_3\int\limits_{0}^{t}(u, D_{0 s}^{-\delta}u)ds 
\\
+ &
\int\limits_{0}^{t}(u, D_{0 s}^{-\alpha}\mathcal{A}u)ds = \int\limits_{0}^{t}(u,f)ds.
	\label{eq7_8}
	\end{align}
Taking into account the following inequality:
\begin{align*}
(u(s),f(s))
%\leq  \|u(s)\| \|f(s)\| 
\leq 
\varepsilon \|u(s)\|^2 + \frac{1}{4\varepsilon}  \|f(s)\|^2
, \quad
\varepsilon >0
,
\end{align*}
as well as corollary \ref{cor_1}, by omitting positive terms we reach
\begin{align}
\nonumber
		\|u(t)\|^2 + &
\varkappa_1 D_{0t}^{\gamma-1}\|u(t)\|^2 + 2\varkappa_2 \int\limits_{0}^{t}\|u(s)\|^2ds
 \\&
 \leq 
 2\varepsilon \int\limits_{0}^{t}\|u(s)\|^2ds + 
\left(1+\varkappa_1 \omega_{2-\gamma}^{(t)} \right)\|u_0\|^2 + \frac{1}{2\varepsilon}\int\limits_{0}^{t}\|f(s)\|^2ds.
	\label{eq7_9}  
\end{align}
If $\varkappa_2>0$, by choosing $\varepsilon=\varkappa_2/2$ one can
 reach the a priori estimate
(\ref{eq7_7}).
However, for general case $\varkappa_2 \geq 0$
 we need to estimate $\int\limits_{0}^{t}\|u(s)\|^2ds$.
For this purpose,
we integrate \eqref{eq7_9} with respect to the variable $t$ form $0$ to $t$.
Set  $\varepsilon=1/(4T)$,
taking into account the following  inequality: 
\begin{align*}
\int\limits_{0}^{t}d\xi\int\limits_{0}^{\xi}\|u(s)\|^2ds = \int\limits_{0}^{t}(t-s)\|u(s)\|^2ds\leq T\int\limits_{0}^{t}\|u(s)\|^2ds
,
\end{align*}
 we reach
\begin{equation}
		\int\limits_{0}^{t}\|u(s)\|^2ds 
\leq 2
\left(T+\varkappa_1 \omega_{3-\gamma}^{(T)} \right)\|u_0\|^2 + 4T^2\int\limits_{0}^{t}\|f(s)\|^2ds.
	\label{eq7_10}  
\end{equation}
Now, the validity of the a priori estimate (\ref{eq7_7}) follows from (\ref{eq7_9}) and (\ref{eq7_10}), for $\varepsilon=1/(4T)$.
\end{proof}

\subsection{\bf A second order difference scheme}\label{design}
The objective of this subsection is to introduce a second-order difference scheme for approximating the model \eqref{eq6_1}--\eqref{eq6_3}. 
 To accomplish this objective, it is necessary to devise discrete methods that accurately approximate the first derivative, 
the Caputo fractional derivative and the Riemann-Liouville fractional integral. 
\\
We first propose a discrete analog to approximate the Riemann-Liouville fractional integrals in \eqref{eq6_1}. 
Consider the follwing uniform temporal grid  
$\bar \omega_\tau =\{t_j = j\tau, \, j=0,1,\ldots,N,\, \tau=T/N\}$.
 The fractional Riemann-Liouville integral at the point $t=t_{j+1}$ is approximated using the following generalized trapezoidal formula
\begin{align}
\nonumber
D_{0t_{j+1}}^{-\nu}v(t) = & 
%\int\limitk_{0}^{t_{j+1}} \omega_\nu^{(t_{j+1}-\xi)} v(\xi)d\xi = 
\sum\limits_{r=0}^{j}\int\limits_{t_{r}}^{t_{r+1}} \omega_\nu^{(t_{j+1}-\xi)} v(\xi)d\xi
\\ \nonumber
 \approx &
\sum\limits_{r=0}^{j}\int\limits_{t_{r}}^{t_{r+1}} \omega_\nu^{(t_{j+1}-\xi)} \left(v(t_{r+1})\frac{\xi-t_r}{\tau}+v(t_{r})\frac{t_{r+1}-\xi}{\tau}\right)v(\xi)d\xi
\\ \label{eq8_0}
= &
\frac{\tau^{\nu}}{\Gamma(\nu+2)}\left(\sum\limits_{r=0}^{j}c_{j-r}^{(\nu)}v^{r+1} + \bar c_{j+1}^{(\nu)}v^0 \right) = \Delta_{0t_{j+1}}^{-\nu}v,
\end{align}
where  
$
c_0^{(\nu)} = 1,$
and for $r\geq1$ we have 
\begin{align}%\label{}
 c_r^{(\nu)}& = (r+1)^{\nu+1} -2r^{\nu+1}+(r-1)^{\nu+1},
 \\
 \bar c_r^{(\nu)}&=(\nu+1)(r+1)^\nu - \left((r+1)^{\nu+1}-r^{\nu+1}\right)
 .
\end{align}
%---------------------------------------------------------------------------
%
%---------------------------------------------------------------------------
The truncation error of the operator $\Delta_{0t_{j+1}}^{-\nu }$ in \eqref{eq8_0}
is characterized by the following lemma.
\begin{lemma}\label{errorRL} 
Let $v(t) \in C^2[0,t_{j+1}]$.
	For any $\nu \in (0,1)$,
	$\Delta_{0t_{j+1}}^{-\nu  }u$
	is defined in \eqref{eq8_0}. Then we have
	\begin{align}\label{L3-2error}
		\left| \mathcal{D}_{0t_{j+1}}^{-\nu }v(t) - \Delta_{0t_{j+1}}^{-\nu }v \right|=
		\mathcal{O} \left( \tau^{2} \right).
	\end{align}
\end{lemma}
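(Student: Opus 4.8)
The plan is to read $\Delta_{0t_{j+1}}^{-\nu}$ as a composite interpolatory quadrature for the \emph{weighted} integral defining $D_{0t_{j+1}}^{-\nu}v$, and to control the error through the pointwise remainder of piecewise-linear interpolation. First I would subtract the approximation from the exact value over the common partition $\{t_r\}$, so that the error collapses into a single sum of local contributions,
\[
D_{0t_{j+1}}^{-\nu}v(t) - \Delta_{0t_{j+1}}^{-\nu}v = \sum_{r=0}^{j}\int_{t_r}^{t_{r+1}}\omega_\nu^{(t_{j+1}-\xi)}\bigl(v(\xi)-L_r(\xi)\bigr)\,d\xi,
\]
where $L_r$ is the linear interpolant of $v$ on $[t_r,t_{r+1}]$ appearing inside \eqref{eq8_0}.

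Next, on each subinterval I would invoke the standard remainder formula for linear interpolation of a $C^2$ function, $v(\xi)-L_r(\xi) = \tfrac12 v''(\eta_\xi)(\xi-t_r)(\xi-t_{r+1})$ with $\eta_\xi\in(t_r,t_{r+1})$. Since $|(\xi-t_r)(\xi-t_{r+1})|\le \tau^2/4$ on $[t_r,t_{r+1}]$, this yields the uniform pointwise bound $|v(\xi)-L_r(\xi)|\le \tfrac{\tau^2}{8}\max_{[0,t_{j+1}]}|v''|$. Pulling this sup-norm bound out of the sum and recombining the weight integrals into one integral over $[0,t_{j+1}]$ gives
\[
\Bigl| D_{0t_{j+1}}^{-\nu}v - \Delta_{0t_{j+1}}^{-\nu}v \Bigr| \le \frac{\tau^2}{8}\max_{[0,t_{j+1}]}|v''|\int_0^{t_{j+1}}\omega_\nu^{(t_{j+1}-\xi)}\,d\xi = \frac{\tau^2}{8}\,\frac{t_{j+1}^{\nu}}{\Gamma(\nu+1)}\max_{[0,t_{j+1}]}|v''| \le \frac{T^{\nu}}{8\,\Gamma(\nu+1)}\max_{[0,T]}|v''|\,\tau^2,
\]
which is exactly the claimed $\mathcal{O}(\tau^2)$, with a constant depending only on $\nu$, $T$, and $\|v''\|_\infty$.

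The main obstacle is the weak singularity of the kernel $\omega_\nu^{(t_{j+1}-\xi)}=(t_{j+1}-\xi)^{\nu-1}/\Gamma(\nu)$ at the right endpoint of the final subinterval, where it blows up because $\nu-1<0$. The key observation is that for $\nu\in(0,1)$ this singularity is integrable, so $\int_0^{t_{j+1}}\omega_\nu^{(t_{j+1}-\xi)}\,d\xi = t_{j+1}^{\nu}/\Gamma(\nu+1)$ is finite and, crucially, bounded by $T^{\nu}/\Gamma(\nu+1)$ uniformly in $j$ and $\tau$. Because I estimate the interpolation remainder pointwise in the sup norm \emph{before} integrating, the singular weight is absorbed entirely by this one convergent integral and never multiplies an unbounded factor; on the last interval the remainder itself remains bounded since it vanishes at both nodes. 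This decoupling is what preserves uniformity and delivers the full second order rather than a $\nu$-dependent reduced rate. A slightly sharper alternative would estimate each local term by the trapezoidal remainder $\mathcal{O}(\tau^3)$ and sum over the $\mathcal{O}(\tau^{-1})$ subintervals, but the crude sup-norm bound above already suffices to establish \eqref{L3-2error}.
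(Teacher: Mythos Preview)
Your proof is correct and follows essentially the same approach as the paper: both subtract the piecewise-linear interpolant, apply the Lagrange remainder $\tfrac12 v''(\eta_\xi)(\xi-t_r)(\xi-t_{r+1})$, bound the quadratic factor by $\tau^2/4$, and then recombine the weight integrals into $\int_0^{t_{j+1}}\omega_\nu^{(t_{j+1}-\xi)}\,d\xi = \omega_{\nu+1}^{(t_{j+1})}\le \omega_{\nu+1}^{(T)}$. Your additional commentary on the integrable singularity at $\xi=t_{j+1}$ is a welcome clarification, but the argument itself is identical to the paper's.
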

%%%%%%%%%%%%%%%%%%%%%%%%%%%%%%%%%%%%
%%%%%%
%%%%%%%%%%%%%%%%%%%%%%%%%%%%%%%%%%%%
\begin{proof}
	By using the Lagrange interpolation remainder formula, it follows that
\begin{align*}%\label{}
\nonumber 
\left|  D_{0t_{j+1}}^{-\nu}v(t) - \Delta_{0t_{j+1}}^{-\nu}v\right| 
& \leq   \sum\limits_{r=0}^{j}\int\limits_{t_{r}}^{t_{r+1}}\omega_\nu^{(t_{j+1}-\xi)}\frac{M_2}{2}(t_{r+1}-\xi)(\xi-t_r)d\xi
\\&
\leq\frac{ M_2}{8} \tau^2 \sum\limits_{r=0}^{j}\int\limits_{t_{r}}^{t_{r+1}}\omega_\nu^{(t_{j+1}-\xi)}d\xi 
= \frac{ M_2}{8}\tau^2\int\limits_{0}^{t_{j+1}}\omega_\nu^{(t_{j+1}-\xi)} d\xi
\leq
\frac{M_2 \omega_{\nu+1}^{(T)}}{8}
\tau^2 = \mathcal{O}(\tau^2),
\end{align*}
Here
$ M_2 = \max\limits_{0\leq t\leq T}|v''(t)| $.
\end{proof}
Next, we proceed with approximating the Caputo fractional derivative
 in \eqref{eq6_1}. 
 We utilize the recently developed L2 formula presented in 
 \cite{alikhanov2021high}
   to approximate the derivative at the point $t=t_{j+1}$
($j=1, 2, \ldots, N-1$). This method is as follows:
\begin{equation}\label{eq8}
\partial_{0t_{j+1}}^{\nu} u(t) =
\Delta_{0t_{j+1}}^{\nu}u(t) +R^{j+1}= 
\frac{\omega_{2-\nu}^{(\tau)} }{\tau} \sum\limits_{r=0}^{j}a_{j-r}^{(\nu)}(u^{r+1}-u^r)
+R^{j+1},
\quad 0<\nu<1
,
\end{equation}
for $j=1$
$$
	a_{r}^{(\nu)}=
	\begin{cases}
		b_{0}^{(\nu)}+d_{0}^{(\nu)}+d_{1}^{(\nu)}, & r=0,\\
		b_{1}^{(\nu)}-d_{0}^{(\nu)}-d_{1}^{(\nu)}, & r=1,
	\end{cases}
$$
for $j=2$
$$
	a_{r}^{(\nu)}=
	\begin{cases}
		b_{0}^{(\nu)}+d_{0}^{(\nu)},                                   & r=0,\\
		b_{1}^{(\nu)}-d_{0}^{(\nu)}+d_{1}^{(\nu)}+d_{2}^{(\nu)}, & r=1,\\
		b_{2}^{(\nu)}-d_{1}^{(\nu)}-d_{2}^{(\nu)},                  & r=2,
	\end{cases}
$$
%
%
%where 
%for j=1, 
%$c_{0}^{(\nu)}= a_{0}^{(\nu)}+b_{0}^{(\nu)}+b_{1}^{(\nu)}$,
%$c_{1}^{(\nu)}= a_{1}^{(\nu)}-b_{1}^{(\nu)}-b_{0}^{(\nu)}$,
%%------------------------------------------------------------
%for j=2, 
%$c_{0}^{(\nu)}= a_{0}^{(\nu)}+b_{0}^{(\nu)}$,
%$c_{1}^{(\nu)}= a_{1}^{(\nu)}+b_{1}^{(\nu)}+b_{2}^{(\nu)}-b_{0}^{(\nu)}$,
%$c_{2}^{(\nu)}= a_{2}^{(\nu)}-b_{2}^{(\nu)}-b_{1}^{(\nu)}$,
for $j\geq 3$,
$$
	a_{r}^{(\nu )}=
	\begin{cases}
		b_{0}^{(\nu)}+d_{0}^{(\nu)},                                         &r=0,\\
		b_{r}^{(\nu)}-d_{r-1}^{(\nu)}+d_{r}^{(\nu)},                      & 1\leq r\leq j-2,\\
		b_{j-1}^{(\nu)}-d_{j-2}^{(\nu)}+d_{j-1}^{(\nu)}+d_{j}^{(\nu)}, & r=j-1,\\
		b_{j}^{(\nu)}-d_{j-1}^{(\nu)}-d_{j}^{(\nu)},                      & r=j,
	\end{cases}
$$
with
\begin{align}%\label{}
&b_{r}^{(\nu)}=(r+1)^{1-\nu}-r^{1-\nu}, ~ 
\\
&d_{r}^{(\nu)}=\frac{1}{2-\nu}\left[(r+1)^{2-\nu}-r^{2-\nu}\right]-\frac{1}{2}\left[(r+1)^{1-\nu}+r^{1-\nu}\right],
\quad r\geq 0.
\end{align}
%--------------------------------------------------------------------------
%%%%
%--------------------------------------------------------------------------
%%%%%%%%%%%%%%%%%%%%%%%%%%%%%%%%%%%%%5
If  $u(t)\in \mathcal{C}^3[0,T]$, the truncation error of the L2 formula \eqref{eq8} satisfies
$R^{j+1}=\mathcal{O}\left(\tau^{3-\nu}\right)$.
\\
In order to attain  the desired convergence order of the proposed scheme,
 we employ a second-order finite difference formula to approximate the first derivative term in equation \eqref{eq6_1}. This approximation is given by
\begin{align}\label{extra}
	\dfrac{d}{d t}u(\mbox{\boldmath${x}$},t_{j+1}) =\frac{3u(\mbox{\boldmath${x}$},t_{j+1})
-4u(\mbox{\boldmath${x}$},t_j)
+u(\mbox{\boldmath${x}$},t_{j-1})} {2\tau}
	+ \mathcal{O}(\tau^2)
.
\end{align}
The validity of this approximation can be readily confirmed through the application of Taylor's Theorem.

Let
$y(\mbox{\boldmath${x}$},t_j) =y^j\in \mathcal{C}^{3}_{t}\left(\bar{\omega}_{\tau}\right)$  be the
numerical solution of $u(\mbox{\boldmath${x}$},t_j) $ on the domain $\bar{\omega}_{\tau}$.
Applying the temporal discretizations \eqref{eq8_0}, \eqref{eq8} and
 \eqref{extra} 
the differential model problem \eqref{eq6_1}--\eqref{eq6_3},
 we propose the following difference  scheme with the order of accuracy $\mathcal{O}\left( \tau^2 + \varepsilon(h) \right)$
to approximate the model
\begin{align}
&
	\mathcal{D}_{\tau}y^{j+1} + \Delta_{0t_{j+1}}^{-\alpha}\mathcal{A}_{h}y = \varphi^{j+1}, \quad j =1, 2, \ldots, N-1,
	\label{eq8_1}
\\&
 y^0=\phi,\quad y^1 = \phi + \psi\tau, 
 \label{eq8_2}
\end{align}
in which 
\begin{align}\label{md}
\mathcal{D}_{\tau}y^{j+1} = 	\frac{3y^{j+1}-4y^j+y^{j-1}}{2\tau} +\varkappa_1
\Delta_{0t_{j+1}}^{\gamma}y+\varkappa_2 y^{j+1}
+\varkappa_3
\Delta_{0t_{j+1}}^{-\delta}y
.
\end{align}
In \eqref{eq8_2}
at the initial time step $t_1$, we estimate the value of $y^1$ by employing the second-order
 Taylor's approximation.
The operator 
$\mathcal{A}_{h}$ is  some discrete  analog  of the corresponding elliptic operator $\mathcal{A}$ with error $\varepsilon(h) = \mathcal{A}u - \mathcal{A}_{h }u$. 
Some common approaches of $\mathcal{A}_{h}$ are 
the  ﬁnite  element method, the finite difference operator, 
 the compact difference operator, and
 the space spectral scheme.   
In what follows, we will assume that the operator $\mathcal{A}_{h}$ is also self-adjoint and positive-definite in the finite-dimensional Hilbert space $\mathcal{H}_h = L_2({\bar{\omega}_h})$, where $\bar{\omega}_h$ is some partition of the domain $\bar{\omega}$.
Furthermore,  $\varphi^j$ in \eqref{eq8_1},
 is an approximation of $f(\mbox{\boldmath${x}$},t_j)$
 in such a way that $\varphi^j-f(\mbox{\boldmath${x}$},t_j)=\mathcal{O}{(\tau^2 + \varepsilon(h))}$.

%%%%%%%%  Section 3 %%%%%%%%%%%%%%%%%%%%%%%%%%%%%%
\section{Theoretical analysis of the proposed difference scheme}\label{conv}

Consider the error $e = y - u$. Substituting $y = e + u$ into the equation (\ref{eq8_1}) and the initial conditions (\ref{eq8_2}), we obtain the problem for the error as follows
\begin{align}
&
	\mathcal{D}_{\tau}e^{j+1} + \Delta_{0t_{j+1}}^{-\alpha}\mathcal{A}_{h}e = R^{j+1}, \quad j =1,2,\ldots, M,
	\label{eq9_1}
%\end{equation}
%\begin{equation} 
\\ &
	e^0 = 0, \quad e^1 = \mu, 
	\label{eq9_2}
\end{align}
where
\begin{align*}
&
R^{j+1} = -\mathcal{D}_{\tau}u^{j+1} - \Delta_{0t_{j+1}}^{-\alpha}\mathcal{A}_{h}u + \varphi^{j+1} = \mathcal{O}{(\tau^2 + \varepsilon(h))},
\\
&
\mu = -u^1+\phi + \psi\tau = \mathcal{O}(\tau^2).
\end{align*}
\begin{theorem}\label{thm_e}
The difference scheme (\ref{eq9_1})-(\ref{eq9_2}) is unconditionally stable and the following a priori estimate is valid for its solution
	\begin{equation}
	\|e^{j+1}\|^2\leq C_2
\left(\sum\limits_{s=1}^{j}\|R^{s+1}\|^2\tau+\| \mu \|^2
+\tau^{\alpha+1} 
\| \mu \|_{\mathcal{A}_{h}}^2
\right),
	\label{eq9_3}
	\end{equation}
where 
$C_2>0$
 is a constant independent of $\tau$ and $h$.
% and 
% $(v,w)_0 = \sum\limits_{i=1}^{N-1}v_iw_ih$, 
% $\|v\|_0^2 = (v, v)_0$, 
% $\|v\|_{\mathcal{A}_{h}}^2 = \left(v, \mathcal{A}_{h} v\right)_0 $,  
\end{theorem}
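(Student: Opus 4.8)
The plan is to reproduce the continuous energy argument of Theorem~\ref{theorem_estim} at the discrete level, handling the second-order backward-difference operator and the three fractional operators in \eqref{md} separately. Concretely, I would test the error equation \eqref{eq9_1} at level $s$ in the $\mathcal{H}_h$-inner product against $2\tau e^{s+1}$ and sum over $s=1,\dots,j$, then bound each of the resulting contributions from below so that a discrete energy at level $j+1$ dominates the left-hand side while the data terms $\|\mu\|^2$, $\tau^{\alpha+1}\|\mu\|_{\mathcal{A}_h}^2$ and the source $\tau\sum_s\|R^{s+1}\|^2$ accumulate on the right.

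For the first-difference part of $\mathcal{D}_\tau$ I would use the G-stability identity for BDF2,
\begin{equation*}
(3e^{s+1}-4e^s+e^{s-1},e^{s+1}) = G^{s+1}-G^{s} + \tfrac12\|e^{s+1}-2e^{s}+e^{s-1}\|^2,
\end{equation*}
with $G^{s}=\tfrac12\bigl(\|e^{s}\|^2+\|2e^{s}-e^{s-1}\|^2\bigr)$. Summation then telescopes to $G^{j+1}-G^{1}$, the quadratic remainder is nonnegative and is discarded, and since $e^0=0,\ e^1=\mu$ one has $G^{1}=\tfrac52\|\mu\|^2$ and $G^{j+1}\ge\tfrac12\|e^{j+1}\|^2$. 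This is the discrete analogue of $\int_0^t(u,u_t)\,ds=\tfrac12(\|u(t)\|^2-\|u_0\|^2)$ and is precisely what generates the $\|\mu\|^2$ term in \eqref{eq9_3}.

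For the fractional contributions I would invoke discrete analogues of Lemma~\ref{lem_eneq_C_D}, Lemma~\ref{lem_positive} and Corollary~\ref{cor_1}. The L2-type Caputo term $\varkappa_1\Delta_{0t_{s+1}}^{\gamma}e$ and the Riemann--Liouville term $\varkappa_3\Delta_{0t_{s+1}}^{-\delta}e$ both yield nonnegative summed quadratic forms, up to an initial correction absorbed into $\|\mu\|^2$, because the associated weight sequences inherit the nonnegativity, monotonicity and convexity hypotheses of Lemma~\ref{lem_positive}; together with $\varkappa_2\ge0$ these terms are dropped. The crucial term is the elliptic one: writing $(\mathcal{A}_h e^{r+1},e^{s+1})=(e^{r+1},e^{s+1})_{\mathcal{A}_h}$ and using $e^0=0$, the summed contribution equals $\frac{\tau^{\alpha+1}}{\Gamma(\alpha+2)}\sum_{s=1}^{j}\sum_{r=0}^{s}c_{s-r}^{(\alpha)}(e^{r+1},e^{s+1})_{\mathcal{A}_h}$. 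Since $\{c_r^{(\alpha)}\}$ is positive, decreasing and convex (it is the second difference of $r\mapsto r^{\alpha+1}$ with $\alpha+1\in(1,2)$), the full Toeplitz form with the outer index running from $0$ is positive semidefinite; the form appearing here merely omits the $s=0$ row, whose value is $\frac{\tau^{\alpha+1}}{\Gamma(\alpha+2)}\|e^1\|_{\mathcal{A}_h}^2=\frac{\tau^{\alpha+1}}{\Gamma(\alpha+2)}\|\mu\|_{\mathcal{A}_h}^2$. Hence the elliptic term is bounded below by $-C\tau^{\alpha+1}\|\mu\|_{\mathcal{A}_h}^2$, producing exactly the last term of \eqref{eq9_3}.

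Finally, the right-hand side is controlled by $2\tau(R^{s+1},e^{s+1})\le \varepsilon\tau\|e^{s+1}\|^2+\varepsilon^{-1}\tau\|R^{s+1}\|^2$; collecting the estimates gives $G^{j+1}\le C\bigl(\|\mu\|^2+\tau^{\alpha+1}\|\mu\|_{\mathcal{A}_h}^2+\tau\sum_{s=1}^{j}\|R^{s+1}\|^2\bigr)+\varepsilon\tau\sum_{s=1}^{j}\|e^{s+1}\|^2$, and a discrete Grönwall inequality, using $\|e^{s+1}\|^2\le 2G^{s+1}$, removes the last sum and yields \eqref{eq9_3} via $\|e^{j+1}\|^2\le 2G^{j+1}$. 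I expect the main obstacle to be the elliptic-term estimate: establishing the positive semidefiniteness of the trapezoidal quadrature kernel $\{c_r^{(\alpha)}\}$ as a discrete counterpart of Lemma~\ref{lem_positive}, and accounting correctly for the single missing initial row so that precisely the factor $\tau^{\alpha+1}\|\mu\|_{\mathcal{A}_h}^2$ appears with no uncontrolled $\tau^{-1}$ growth. A secondary difficulty is verifying the discrete positivity of the L2 Caputo form simultaneously with the BDF2 energy, so that the two structures remain compatible under the single test function $2\tau e^{s+1}$.
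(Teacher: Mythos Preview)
Your plan matches the paper's proof closely: test \eqref{eq9_1} with $e^{s+1}\tau$, sum in $s$, handle the BDF2 part by the G-stability identity (this is exactly Lemma~\ref{lm_ineq} with $\kappa_0=3/2$, $\kappa_1=-1/2$), the Riemann--Liouville and elliptic terms by the Toeplitz positivity of Lemma~\ref{ThL}, with the omitted $s=0$ row producing precisely $\omega_{\alpha+2}^{(\tau)}\|\mu\|_{\mathcal{A}_h}^2$, and the source by Young's inequality. The only small discrepancies are that the L2 Caputo weights do \emph{not} directly satisfy the simple nonnegativity/monotonicity/convexity hypothesis you invoke---the paper uses the dedicated positivity result Lemma~\ref{lem16} instead---and that in the last step the paper absorbs $\varepsilon\tau\sum_s\|e^{s+1}\|^2$ via the second-summation trick of Theorem~\ref{theorem_estim} (choosing $\varepsilon=\varkappa_2$ or $\varepsilon=1/(4T)$) rather than a discrete Gr\"onwall inequality.
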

To prove Theorem \ref{thm_e},
 it is crucial to present the subsequent Lemmas.
%%%%----------------------------------------------------------------------
%%%%
%%%%----------------------------------------------------------------------
%%%%
\begin{lemma} \cite{alikhanov2021high} \label{lm_ineq} For any real constants $\kappa_0, \kappa_1$ such that
	$\kappa_0\geq\max\{\kappa_1,-3\kappa_1\}$, and $\{v_j\}_{j=0}^{j=M}$ the following
	inequality holds for $ j=1, 2, \ldots, N-1$ 
	\begin{equation}
		v_{j+1}\left(\kappa_0v_{j+1}-(\kappa_0-\kappa_1)v_{j}-\kappa_1v_{j-1}\right)\geq
		E_{j+1}(\kappa_0, \kappa_1)-E_{j}(\kappa_0, \kappa_1), 
\label{url5}
	\end{equation}
	where for $ j=1, 2, \ldots, N$ we have
\begin{align*}
	E_{j}(\kappa_0, \kappa_1)=&
\left(\frac{1}{2}\sqrt{\frac{\kappa_0-\kappa_1}{2}}+\frac{1}{2}\sqrt{\frac{\kappa_0+3\kappa_1}{2}}\right)^2v_{j}^{2}	
\\&+
\left(\sqrt{\frac{\kappa_0-\kappa_1}{2}}v_{j}-\left(\frac{1}{2}\sqrt{\frac{\kappa_0-\kappa_1}{2}}+\frac{1}{2}\sqrt{\frac{\kappa_0+3\kappa_1}{2}}\right)v_{j-1}\right)^2
. 
\end{align*}
\end{lemma}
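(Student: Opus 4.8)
The plan is to prove \eqref{url5} by showing that the difference between its two sides is a single perfect square, and hence nonnegative. The starting observation is that the hypothesis $\kappa_0 \geq \max\{\kappa_1, -3\kappa_1\}$ is exactly the pair of conditions $\kappa_0 - \kappa_1 \geq 0$ and $\kappa_0 + 3\kappa_1 \geq 0$, so the abbreviations $a := \sqrt{(\kappa_0-\kappa_1)/2}$ and $b := \sqrt{(\kappa_0+3\kappa_1)/2}$ are well-defined, real and nonnegative. Inverting these relations gives $\kappa_0 = (3a^2+b^2)/2$, $\kappa_1 = (b^2-a^2)/2$ and $\kappa_0 - \kappa_1 = 2a^2$, so every coefficient appearing in \eqref{url5} becomes a polynomial in $a$ and $b$. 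Writing $c := (a+b)/2$, the energy $E_j$ collapses to the compact form $E_j = c^2 v_j^2 + (a v_j - c v_{j-1})^2$, which is manifestly the quantity defined in the statement.

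Next I would expand both sides in the $a,b$ variables. From the compact form, the telescoped energy difference is
\begin{align*}
E_{j+1} - E_j = (c^2+a^2)v_{j+1}^2 - 2ac\, v_{j+1}v_j - a^2 v_j^2 + 2ac\, v_j v_{j-1} - c^2 v_{j-1}^2,
\end{align*}
while the left-hand side of \eqref{url5} reads
\begin{align*}
v_{j+1}\bigl(\kappa_0 v_{j+1} - (\kappa_0-\kappa_1)v_j - \kappa_1 v_{j-1}\bigr) = \tfrac{3a^2+b^2}{2}v_{j+1}^2 - 2a^2 v_{j+1}v_j - \tfrac{b^2-a^2}{2}v_{j+1}v_{j-1}.
\end{align*}
Subtracting and collecting the six monomials in $v_{j+1}, v_j, v_{j-1}$ yields a homogeneous quadratic form whose coefficients I would record explicitly.

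The decisive step is to recognize that this quadratic form factors as a perfect square; a direct computation gives
\begin{align*}
v_{j+1}\bigl(\kappa_0 v_{j+1} - (\kappa_0-\kappa_1)v_j - \kappa_1 v_{j-1}\bigr) - \bigl(E_{j+1}-E_j\bigr) = \left(\tfrac{a-b}{2}v_{j+1} - a\, v_j + \tfrac{a+b}{2}v_{j-1}\right)^2 \geq 0,
\end{align*}
which is precisely the inequality \eqref{url5}. To certify the factorisation I would check the coefficient identities against the ansatz $p v_{j+1} + q v_j + s v_{j-1}$ with $p = (a-b)/2$, $q = -a$, $s = (a+b)/2$: the three square-term coefficients fix $p^2, q^2, s^2$, and the three cross-term identities $2ps = (a^2-b^2)/2$, $2pq = a(b-a)$ and $2qs = -a(a+b)$ confirm the signs.

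The main obstacle is purely algebraic rather than analytic: there is no estimate or limiting argument, only the need to guess the correct square. Once the three-term ansatz is posited, its squared coefficients determine $|p|, |q|, |s|$, and the reality (hence the definite signs) of $a$ and $b$ forces all three cross terms to match simultaneously, which is exactly why the hypothesis $\kappa_0 \geq \max\{\kappa_1, -3\kappa_1\}$ is indispensable. I would close by noting that the telescoping form $E_{j+1}-E_j$ is what makes the lemma summable in $j$, and thus directly usable in the discrete energy estimate underlying Theorem \ref{thm_e}.
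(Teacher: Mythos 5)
Your proof is correct: the substitution $a=\sqrt{(\kappa_0-\kappa_1)/2}$, $b=\sqrt{(\kappa_0+3\kappa_1)/2}$ (real precisely under the stated hypothesis) gives $\kappa_0=(3a^2+b^2)/2$, $\kappa_1=(b^2-a^2)/2$, and all six coefficient identities check out, so the difference of the two sides of \eqref{url5} is exactly $\left(\tfrac{a-b}{2}v_{j+1}-a\,v_j+\tfrac{a+b}{2}v_{j-1}\right)^2\geq 0$. The paper itself states this lemma by citation to \cite{alikhanov2021high} without reproducing a proof, and your telescoping perfect-square argument is essentially the same completing-the-square technique used in that reference, so nothing further is needed.
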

%%%----------------------------------------------------------------------
%%%
%%%----------------------------------------------------------------------
\begin{lemma}\label{lem16}  \cite{alikhanov2021high} For any function $v(t)$ defined on the grid
	$\overline \omega_{\tau}$ the following inequality holds.
	\begin{align}\label{url6}		
v_{j+1} \Delta_{0t_{j+1}}^{\gamma}v
\geq
%\frac{\tau^{-\gamma}}{\Gamma(2-\gamma)}\left(E_{j+1}^{(\gamma)}-E_{j}^{(\gamma)}\right)+\frac{1}{2}\bar{\Delta}_{0t_{j+1}}^{\gamma}v^2 =
\frac{\tau^{-\gamma}}{\Gamma(2-\gamma)}\left(\mathcal{E}_{j+1}^{(\gamma)}-\mathcal{E}_{j}^{(\gamma)}\right) - \frac{\tau^{-\gamma} \bar c_j }{2\Gamma(2-\gamma)}v_0^2,
	\end{align}
	where  $ j=1, 2, 3, \ldots, N$,  and
%	$$
%	\bar{\Delta}_{0t_{j+1}}^{\gamma}v=\frac{\tau^{-\gamma}}{\Gamma{(2-\gamma)}}\sum\limits_{s=0}^{j}\bar{c}_{j-s}^{(\gamma)}(v_{s+1}-v_{s}),\quad
%	j=1, 2, \ldots, M,
%	$$
	$$
 \mathcal{E}_{j}^{(\gamma)} = E_{j}^{(\gamma)} + \frac{1}{2}\sum\limits_{s=0}^{j-1}\bar{a}_{j-1-s}^{(\gamma)}v_{s+1}^2
 ,
 \quad 
 E_j^{(\gamma)}=E_j({a}_{0}^{(\gamma)}-{a}_{2}^{(\gamma)}, {a}_{1}^{(\gamma)}-{a}_{2}^{(\gamma)}),
	$$
	$$
	\bar{a}_{0}^{(\gamma)}={a}_{2}^{(\gamma)},\quad
	\bar{a}_{1}^{(\gamma)}={a}_{2}^{(\gamma)},\quad
	\bar{a}_{s}^{(\gamma)}={a}_{s}^{(\gamma)},\quad s=2,3,\ldots,j
.
	$$
	%$$
	%\text{for}\quad j=1, \quad
	%E_1=E_1({a}_{0}^{(\alpha)},{a}_{1}^{(\alpha)}),\quad
	%E_2=E_2({a}_{0}^{(\alpha)},{a}_{1}^{(\alpha)})
	%$$
\end{lemma}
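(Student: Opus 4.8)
The plan is to strip off the positive prefactor and reduce \eqref{url6} to a purely algebraic inequality, then split the discrete convolution into a three-point head — to which Lemma \ref{lm_ineq} applies verbatim — and a tail governed by the nonincreasing modified kernel $\bar a^{(\gamma)}$. Since $\Delta_{0t_{j+1}}^{\gamma}v=\frac{\tau^{-\gamma}}{\Gamma(2-\gamma)}\sum_{r=0}^{j}a_{j-r}^{(\gamma)}(v_{r+1}-v_r)$ and the factor $\tau^{-\gamma}/\Gamma(2-\gamma)$ is positive, it suffices to prove
\[
v_{j+1}\sum_{r=0}^{j}a_{j-r}^{(\gamma)}(v_{r+1}-v_r)\ \ge\ \mathcal{E}_{j+1}^{(\gamma)}-\mathcal{E}_{j}^{(\gamma)}-\tfrac12\,\bar a_{j}^{(\gamma)}v_{0}^{2},
\]
the boundary weight $\bar a_j^{(\gamma)}$ being exactly the quantity abbreviated $\bar c_j$ in the statement. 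Using $\bar a_0^{(\gamma)}=\bar a_1^{(\gamma)}=a_2^{(\gamma)}$ and $\bar a_s^{(\gamma)}=a_s^{(\gamma)}$ for $s\ge2$, I would keep $a_{j-r}^{(\gamma)}=\bar a_{j-r}^{(\gamma)}$ for $0\le r\le j-2$ and isolate the two latest increments, obtaining
\[
\sum_{r=0}^{j}a_{j-r}^{(\gamma)}(v_{r+1}-v_r)=(a_0^{(\gamma)}-a_2^{(\gamma)})(v_{j+1}-v_j)+(a_1^{(\gamma)}-a_2^{(\gamma)})(v_j-v_{j-1})+\sum_{r=0}^{j}\bar a_{j-r}^{(\gamma)}(v_{r+1}-v_r).
\]

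For the head, multiplying the two correction terms by $v_{j+1}$ produces precisely $v_{j+1}\bigl(\kappa_0 v_{j+1}-(\kappa_0-\kappa_1)v_j-\kappa_1 v_{j-1}\bigr)$ with $\kappa_0=a_0^{(\gamma)}-a_2^{(\gamma)}$ and $\kappa_1=a_1^{(\gamma)}-a_2^{(\gamma)}$. Lemma \ref{lm_ineq} then bounds this below by $E_{j+1}(\kappa_0,\kappa_1)-E_j(\kappa_0,\kappa_1)=E_{j+1}^{(\gamma)}-E_j^{(\gamma)}$, \emph{provided} its hypothesis $\kappa_0\ge\max\{\kappa_1,-3\kappa_1\}$ holds, i.e. $a_0^{(\gamma)}\ge a_1^{(\gamma)}$ and $a_0^{(\gamma)}+3a_1^{(\gamma)}\ge 4a_2^{(\gamma)}$. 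Establishing these two inequalities, together with the positivity and monotonicity $a_2^{(\gamma)}\ge a_3^{(\gamma)}\ge\cdots\ge a_j^{(\gamma)}\ge0$ needed for the tail, is the analytic heart of the matter; these are the structural properties of the L2 weights $a_r^{(\gamma)}$ (assembled from $b_r^{(\gamma)}$ and the corrections $d_r^{(\gamma)}$) established in \cite{alikhanov2021high}, and I would invoke them here rather than re-derive the delicate estimates on $b_r^{(\gamma)},d_r^{(\gamma)}$.

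For the tail $v_{j+1}\sum_{r=0}^{j}\bar a_{j-r}^{(\gamma)}(v_{r+1}-v_r)$ I would use the elementary identity
\[
v_{j+1}(v_{r+1}-v_r)=\tfrac12\bigl(v_{r+1}^2-v_r^2\bigr)+\tfrac12\bigl((v_{j+1}-v_r)^2-(v_{j+1}-v_{r+1})^2\bigr),
\]
valid for every $r$. Summed against $\bar a_{j-r}^{(\gamma)}$, the first group telescopes: a reindexing gives $\tfrac12\sum_{r=0}^{j}\bar a_{j-r}^{(\gamma)}(v_{r+1}^2-v_r^2)=H_{j+1}-H_j-\tfrac12\bar a_j^{(\gamma)}v_0^2$, where $H_j:=\tfrac12\sum_{s=0}^{j-1}\bar a_{j-1-s}^{(\gamma)}v_{s+1}^2$ is exactly the history part of $\mathcal{E}_j^{(\gamma)}$. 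The second group equals $Q:=\tfrac12\sum_{r=0}^{j}\bar a_{j-r}^{(\gamma)}\bigl((v_{j+1}-v_r)^2-(v_{j+1}-v_{r+1})^2\bigr)$; writing $g_r:=(v_{j+1}-v_r)^2\ge0$ with $g_{j+1}=0$ and summing by parts yields $Q=\tfrac12\bar a_j^{(\gamma)}g_0+\tfrac12\sum_{r=1}^{j}(\bar a_{j-r}^{(\gamma)}-\bar a_{j-r+1}^{(\gamma)})g_r\ge0$, since $\bar a^{(\gamma)}$ is nonnegative and nonincreasing. Hence $v_{j+1}\sum_{r}\bar a_{j-r}^{(\gamma)}(v_{r+1}-v_r)\ge H_{j+1}-H_j-\tfrac12\bar a_j^{(\gamma)}v_0^2$.

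Adding the head and tail estimates and recalling $\mathcal{E}_j^{(\gamma)}=E_j^{(\gamma)}+H_j$ gives the reduced inequality above, and restoring the factor $\tau^{-\gamma}/\Gamma(2-\gamma)$ produces \eqref{url6}. The small-index cases $j=1,2$ would be handled identically with the case-specific weights listed before the lemma (for $j=1$ there is no tail and Lemma \ref{lm_ineq} applies directly with $\kappa_0=a_0^{(\gamma)},\,\kappa_1=a_1^{(\gamma)}$). I expect the main obstacle to be not the energy bookkeeping, which is essentially forced once the splitting is chosen, but the verification of the sign and monotonicity conditions on the weights, namely $a_0^{(\gamma)}\ge a_1^{(\gamma)}$, $a_0^{(\gamma)}+3a_1^{(\gamma)}\ge4a_2^{(\gamma)}$, and the nonnegative, nonincreasing character of $\{a_r^{(\gamma)}\}_{r\ge2}$. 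These are precisely where the corrections $d_r^{(\gamma)}$ must be controlled and constitute the crux inherited from \cite{alikhanov2021high}.
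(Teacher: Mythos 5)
The paper gives no proof of this lemma---it is quoted verbatim from \cite{alikhanov2021high}---and your argument is, in substance, the proof given there: the same splitting of the convolution into a two-increment head plus a modified tail with $\bar a_0^{(\gamma)}=\bar a_1^{(\gamma)}=a_2^{(\gamma)}$, the same application of Lemma \ref{lm_ineq} with $\kappa_0=a_0^{(\gamma)}-a_2^{(\gamma)}$, $\kappa_1=a_1^{(\gamma)}-a_2^{(\gamma)}$, and the same telescoping-plus-Abel-summation positivity argument for the memory part, with the coefficient properties you delegate ($a_0^{(\gamma)}\ge a_1^{(\gamma)}$, $a_0^{(\gamma)}+3a_1^{(\gamma)}\ge 4a_2^{(\gamma)}$, and the nonnegative nonincreasing character of $\{a_s^{(\gamma)}\}_{s\ge 2}$) indeed established in that reference. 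Your reading of $\bar c_j$ in \eqref{url6} as $\bar a_j^{(\gamma)}$, and your direct treatment of the small cases $j=1,2$ with the case-specific weights, are likewise the intended ones, so the proposal is correct and follows the source proof essentially verbatim.
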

%%%----------------------------------------------------------------------
%%%%%
%%%----------------------------------------------------------------------
\begin{lemma}\label{ThL} \cite{Thome1993}
	Assume that the sequence $\left\{a_n\right\}_{n = 0}^{\infty}$ of real numbers is satisfy the following properties 
	$$
	a_n \geq 0,
 \quad 
 a_{n} - a_{n+1}\geq 0, \quad 
 a_{n}-2a_{n+1}+a_{n+2} \geq 0, \quad n = 0, 1, \ldots
	$$
	Then
	$$
	\sum\limits_{s=1}^{n}\sum\limits_{k=1}^{s}a_{s-k}\xi_s\xi_k\geq\frac{1}{2}a_0\sum\limits_{s=1}^{n}\xi_s^2, \quad \forall \left(\xi_1, \xi_2, \ldots, \xi_n\right) \in \mathbf{R}^{n}.
	$$
\end{lemma}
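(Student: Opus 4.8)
The plan is to reduce the stated one-sided (causal) inequality to the positive semidefiniteness of the associated symmetric Toeplitz form, and then to certify that positivity by a Fej\'er-type sum-of-squares decomposition driven by the convexity hypothesis.

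First I would record an exact symmetrization identity. Writing the left-hand side as $Q=\sum_{s=1}^{n}\sum_{k=1}^{s}a_{s-k}\xi_s\xi_k$, splitting off the diagonal $k=s$ (which contributes $a_0\sum_{s}\xi_s^2$) and noting that the strictly lower-triangular part is exactly one half of the full symmetric form $F:=\sum_{s,k=1}^{n}a_{|s-k|}\xi_s\xi_k$, a one-line computation yields
\[
Q=\tfrac12\,a_0\sum_{s=1}^{n}\xi_s^2+\tfrac12\,F .
\]
Hence the statement is equivalent to the claim $F\ge 0$, and this first step is routine algebra, not the source of difficulty.

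Second, I would prove $F\ge 0$. Since the vector $(\xi_1,\dots,\xi_n)$ is fixed, only the entries $a_0,\dots,a_{n-1}$ enter $F$, so I am free to redefine the tail; extending by the constant value $a_{n-1}$ beyond index $n-1$ preserves nonnegativity, monotonicity and convexity (as one checks at the junction). I would then decompose the sequence as a nonnegative superposition of a constant and ``tent'' sequences: for $0\le m\le n-1$,
\[
a_m=a_{n-1}+\sum_{\ell=1}^{n-1}\lambda_\ell\,(\ell-m)_+,\qquad \lambda_\ell=a_{\ell-1}-2a_\ell+a_{\ell+1}\ (\ell\le n-2),\quad \lambda_{n-1}=a_{n-2}-a_{n-1},
\]
where every $\lambda_\ell\ge 0$ by convexity and monotonicity. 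This identity is verified by taking second differences in $m$, which collapse the right-hand side to a single impulse at each level $\ell$; it is the discrete analogue of representing a convex function as a superposition of ramps.

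Third, I would feed this decomposition into $F$ and observe that each building block is a manifest sum of squares. The constant term contributes $a_{n-1}\left(\sum_{s}\xi_s\right)^2$, while each tent gives the Fej\'er identity $\sum_{s,k}(\ell-|s-k|)_+\xi_s\xi_k=\sum_{p}\left(\sum_{j=0}^{\ell-1}\xi_{p+j}\right)^2\ge 0$ (with $\xi$ extended by zero), since the left side is the lag autocorrelation weighted by a triangle. Thus $F=a_{n-1}\left(\sum_{s}\xi_s\right)^2+\sum_{\ell=1}^{n-1}\lambda_\ell\sum_{p}\left(\sum_{j=0}^{\ell-1}\xi_{p+j}\right)^2\ge 0$, and combining with the symmetrization identity delivers $Q\ge\tfrac12 a_0\sum_{s}\xi_s^2$. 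The main obstacle is exactly this middle step, and within it the handling of the non-decaying tail: a nonincreasing nonnegative convex sequence converges to some limit $L\ge 0$ rather than to $0$, so a pure tent decomposition is impossible and one must peel off the constant part as the rank-one all-ones contribution $L\left(\sum_{s}\xi_s\right)^2$. Equivalently, the entire argument can be run by summing by parts twice on $F=a_0R_0+2\sum_{m\ge 1}a_mR_m$ (with $R_m$ the lag-$m$ autocorrelation), transferring two differences onto $\{a_m\}$; the nonnegativity of these second differences together with the nonnegativity of the twice-summed autocorrelations is what forces the sign, and the only delicate bookkeeping is tracking the boundary terms generated by the summation by parts.
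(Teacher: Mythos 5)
Your proof is correct, but there is nothing in the paper to check it against: the paper imports this lemma from McLean and Thom\'ee \cite{Thome1993} and gives no proof of its own, so the comparison can only be with the classical argument in that literature. Your three steps all verify. The symmetrization identity $Q=\tfrac12 a_0\sum_{s}\xi_s^2+\tfrac12 F$ with $F=\sum_{s,k=1}^{n}a_{|s-k|}\xi_s\xi_k$ is exact (the strictly triangular part equals $\tfrac12(F-a_0\sum_s\xi_s^2)$, which is what your displayed formula uses even though the surrounding sentence states it loosely). The tent decomposition is right: first differences of your right-hand side telescope to $\sum_{\ell>m}\lambda_\ell=a_m-a_{m+1}$, and with the anchor value $a_{n-1}$ at $m=n-1$ a backward induction gives equality for all $0\le m\le n-1$, with $\lambda_\ell\ge 0$ by convexity for $\ell\le n-2$ and by monotonicity for $\ell=n-1$; note that your preliminary constant extension of the sequence past index $n-1$ is redundant, since the adjusted top coefficient $\lambda_{n-1}=a_{n-2}-a_{n-1}$ already encodes it and only $a_0,\dots,a_{n-1}$ ever enter $F$. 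The Fej\'er identity $\sum_{s,k}(\ell-|s-k|)_+\xi_s\xi_k=\sum_p\bigl(\sum_{j=0}^{\ell-1}\xi_{p+j}\bigr)^2$ follows by counting, for fixed lag $d$, the $\ell-|d|$ pairs $(j,j')$ with $j-j'=d$, and the hypothesis $a_n\ge 0$ is used exactly once, for the rank-one term $a_{n-1}\bigl(\sum_s\xi_s\bigr)^2$. As to the route: the standard proof behind the citation (going back to Young and Zygmund, and used in the positive-type-kernel framework of \cite{Thome1993}) establishes $F\ge 0$ in the frequency domain, showing the symbol $a_0+2\sum_{m\ge1}a_m\cos m\theta$ is nonnegative by double Abel summation against nonnegative Fej\'er kernels. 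Your argument is the time-domain counterpart of the same mechanism — nonnegative second differences paired with positivity of the triangular kernel — and you correctly note the equivalence via double summation by parts on $F=a_0R_0+2\sum_{m\ge1}a_mR_m$. What your version buys is a fully elementary, finite-dimensional, self-contained sum-of-squares certificate with no Fourier analysis; what the frequency-domain version buys is uniformity in $n$ and the passage to infinite sequences and integral kernels of positive type, which is the setting the cited paper actually works in.
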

%%%----------------------------------------------------------------------
%%%%%
%%%----------------------------------------------------------------------
\begin{remark}
Since Eq. \eqref{eq9_2} implies that $e^0=0$,
 our proposed scheme analysis does not involve the coefficient 
 $\bar c_{j+1}^{(\nu)}$ in \eqref{eq8_0}. 
We only consider the coefficient $c_{j-k}^{(\nu)}$ in \eqref{eq8_0}, 
which can be easily verified to satisfy the conditions 
of Lemma \ref{ThL}.
\end{remark}
%----------------------------------------------------------------------
%
%----------------------------------------------------------------------
\begin{proof}[Proof of Theorem \ref{thm_e}]
In the space $\mathcal{H}_h$,
multiply the equation (\ref{eq9_1}) scalarly  by $e^{j+1}\tau$ and replace $j$ with $s$,
then 
summing  over $s$ from $1$ to $j$ we get the following equality
\begin{equation} 
	\sum\limits_{s=1}^{j}\left(e^{s+1}, \mathcal{D}_{\tau}e^{s+1}\right)\tau + 	\sum\limits_{s=1}^{j}\left(e^{s+1}, \Delta_{0t_{s+1}}^{-\alpha}\mathcal{A}_{h}e\right)\tau = \sum\limits_{s=1}^{j}\left(e^{s+1}, R^{s+1}\right)\tau.
	\label{eq9_4}
	\end{equation}
We estimate each term of the equality \eqref{eq9_4} separately,
considering the deffiniation of the operator 
$ 
\mathcal{D}_{\tau}y^{j+1}$
in \eqref{md} and 
utilising  Lemmas \ref{lm_ineq}--\ref{ThL}.
 %----------------------------------------------------------------------
%
%----------------------------------------------------------------------
\begin{align}\label{first}	
\nonumber &
\sum\limits_{s=1}^{j}\left(e^{s+1},\frac{3e^{s+1}-4e^s+e^{s-1}}{2\tau}\right)\tau 
\geq 
\sum\limits_{s=1}^{j}\left(E_{s+1}  -  E_{s} \right) = E_{j+1}  -  E_{1} , 
\\
&
	E_{s}  = \frac{1}{4}\|e^s\|^2+\|e^s-\frac{1}{2}e^{s-1}\|^2,
\end{align}
%In which  $E_{s}$ is defined in \eqref{E_j}.
%----------------------------------------------------------------------
%
%----------------------------------------------------------------------
\begin{align}
\sum\limits_{s=1}^{j}&
\left(e^{s+1}, \Delta_{0t_{s+1}}^{\gamma}e\right)\tau \geq 
\omega_{2-\gamma}^{(\tau)}
%\frac{\tau^{1-\gamma}}{\Gamma()}
\sum\limits_{s=1}^{j}\left(\mathcal{E}_{s+1}^{(\gamma)}-\mathcal{E}_{s}^{(\gamma)}\right) = 
\omega_{2-\gamma}^{(\tau)}
%\frac{\tau^{1-\gamma}}{\Gamma(2-\gamma)}
\left(\mathcal{E}_{j+1}^{(\gamma)}-\mathcal{E}_{1}^{(\gamma)}\right),
\end{align}
In which, $\mathcal{E}_{j}^{(\gamma)}$ is defined in Lemma \ref{lem16}.
%\\ \nonumber 
%&\qquad
%\mathcal{E}_{s}^{(\gamma)} = E_{s}^{(\gamma)} + \frac{1}{2}\sum\limits_{k=0}^{s-1}\bar{c}_{s-1-k}\|e^{k+1}\|_0^2,
%\end{align} 
%%----------------------------------------------------------------------
%%
%%----------------------------------------------------------------------
%\begin{align}
% \\
% \nonumber
%\mathcal{E}_{s}^{(\gamma)}&= \left(\frac{1}{2}\sqrt{\frac{c_0^{(\gamma)}-c_1^{(\gamma)}}{2}}+\frac{1}{2}\sqrt{\frac{c_0^{(\gamma)}+3c_1^{(\gamma)}-4c_2^{(\gamma)}}{2}}\right)^2\|e^{s}\|^2
%\\ 
%&+
%\left\|\sqrt{\frac{c_0^{(\gamma)}-c_1^{(\gamma)}}{2}}e^{s}
%-\left(\frac{1}{2}\sqrt{\frac{c_0^{(\gamma)}-c_1^{(\gamma)}}{2}}+\frac{1}{2}\sqrt{\frac{c_0^{(\gamma)}+3c_1^{(\gamma)}
%-4c_2^{(\gamma)}}{2}}\right)e^{s-1}\right\|^2
%+ \frac{1}{2}\sum\limits_{k=0}^{s-1}\bar{c}_{s-1-k}\|e^{k+1}\|^2,
%\nonumber
%\\
%\nonumber	\bar{c}_{0}^{(\nu)}&={c}_{2}^{(\nu)},\quad
%	\bar{c}_{1}^{(\nu)}={c}_{2}^{(\nu)},\quad
%	\bar{c}_{s}^{(\nu)}={c}_{s}^{(\nu)},\quad s=2,3,\ldots,j
%.
%\end{align} 
%----------------------------------------------------------------------
%
%----------------------------------------------------------------------
\begin{align}
\nonumber
\sum\limits_{s=1}^{j}
\left(e^{s+1}, \Delta_{0t_{s+1}}^{-\delta}e\right)\tau &=
\omega_{\delta+2}^{(\tau)}
%\frac{\tau^{1+\delta}}{\Gamma(\delta+2)}
\sum\limits_{s=1}^{j}\sum\limits_{k=0}^{s}c_{s-k}^{(\delta)}\left(e^{s+1}, e^{k+1}\right)
\\ \nonumber &
=
\omega_{\delta+2}^{(\tau)}
\left(
%\frac{\tau^{1+\delta}}{\Gamma(\delta+2)}
\sum\limits_{s=0}^{j}\sum\limits_{k=0}^{s}c_{s-k}^{(\delta)}\left(e^{s+1}, e^{k+1}\right) 
- 
%\omega_{\delta+2}(\tau)
%\frac{\tau^{1+\delta}}{\Gamma(\delta+2)}
c_{0}^{(\delta)}\|e^1\|^2
\right)
\\ 
& 
	\geq \frac{\tau^{\delta}}{2\Gamma(\delta+2)}\left(2c_{0}^{(\delta)}-2c_{1}^{(\delta)}+c_{2}^{(\delta)}\right)\sum\limits_{s=0}^{j}\|e^{s+1}\|^2\tau -\omega_{\delta+2}^{(\tau)}
% \frac{\tau^{1+\delta}}{\Gamma(\delta+2)}
c_{0}^{(\delta)}\|e^1\|^2,
\end{align}
%%----------------------------------------------------------------------
%%
%%----------------------------------------------------------------------
\begin{align}
\nonumber 
\sum\limits_{s=1}^{j}
\left(e^{s+1}, \Delta_{0t_{s+1}}^{-\alpha}\mathcal{A}_{h}e\right)\tau &=
\omega_{\alpha+2}^{(\tau)}
%\frac{\tau^{1+\alpha}}{\Gamma(\alpha+2)}
\sum\limits_{s=1}^{j}\sum\limits_{k=0}^{s}c_{s-k}^{(\alpha)}\left(e^{s+1}, \mathcal{A}_{h}e^{k+1}\right)
\\ \nonumber
& =
\omega_{\alpha+2}^{(\tau)}
%\frac{\tau^{1+\alpha}}{\Gamma(\alpha+2)}
\sum\limits_{s=0}^{j}\sum\limits_{k=0}^{s}c_{s-k}^{(\alpha)}\left(e^{s+1}, \mathcal{A}_{h}e^{k+1}\right) 
- \omega_{\alpha+2}^{(\tau)}
%\frac{\tau^{1+\alpha}}{\Gamma(\alpha+2)}
c_{0}^{(\alpha )}\|e^1\|_{\mathcal{A}_{h}}^2
\\&
	\geq
 \frac{\tau^{\alpha}}{2\Gamma(\alpha+2)}
\left(2c_{0}^{(\alpha)}-2c_{1}^{(\alpha)}+c_{2}^{(\alpha)}\right)
\sum\limits_{s=0}^{j}\|e^{s+1}\|_{\mathcal{A}_{h}}^2\tau - 
\omega_{\alpha+2}^{(\tau)}
%\frac{\tau^{1+\alpha}}{\Gamma(\alpha+2)}
c_{0}^{(\alpha)}\|e^1\|_{\mathcal{A}_{h}}^2
.
\end{align}
Here, in the space $\mathcal{H}_h$,
we introduced the following  scaler poduct 
and the corresponding norm
\begin{align}\label{1nrm1}
(u,\mathcal{A}_{h}v)=(u,v)_{\mathcal{A}_{h}}, \quad
\|e\|_{\mathcal{A}_{h}}^2=\left(e, \mathcal{A}_{h}e\right)
.
\end{align}
For the right-hand side of \eqref{eq9_4}, we have 
\begin{align}\label{last}
\sum\limits_{s=1}^{j}\left(e^{s+1}, R^{s+1}\right)\tau
\leq
\varepsilon \sum\limits_{s=1}^{j}  \|e^{s+1}\|^2 \tau
+ \frac{1}{4\varepsilon} \sum\limits_{s=1}^{s+1} \| R^{s+1} \|^2 \tau.
\end{align}
By following a similar procedure to the proof of Theorem \ref{theorem_estim},
if
  $\varkappa_2>0$ or $\varkappa_2\geq 0$,
choosing  $\varepsilon=\varkappa_2$  or  $\varepsilon=1/(4T)$,
   respectively,
 and substituting Eqs. \eqref{first}--\eqref{last} into Eq. \eqref{eq9_4}, 
 while omitting positive terms
  completes the proof. 
\end{proof}

\begin{remark}
  (The multi-term model).

Consider a generalization of the single-therm time-fractional order mixed SDDWE model %Consider a generalization of the single-term fractional order derivatives
%to multiple orders in time-fractional mixed sub-diffusion and diffusion-wave equation  \eqref{eq1}, whit the following assumptions for the derivative orders.equation  \eqref{eq1},
to the following multi-term time-fractional order problem
\begin{align}
\label{eq1M}
&
\partial_{0t}^{\alpha_0+1}\lambda_0 u(\mbox{\boldmath${x}$}, t) + 
\sum\limits_{r=1}^m \lambda_r \partial_{0t}^{\alpha_r+1}u(\mbox{\boldmath${x}$}, t)+ 
\sum\limits_{r=0}^m \omega_r \partial_{0t}^{\beta_r}u(\mbox{\boldmath${x}$}, t) + \mathcal{A}u(\mbox{\boldmath${x}$},t) = g(\mbox{\boldmath${x}$}, t),
\end{align}
%where
% the initial and boundary conditions are the same as \eqref{eq2}, \eqref{eq3} and we have the following assumptions 
with
 $0 < \alpha_{m} < \alpha_{m-1}< \cdots < \alpha_{0}<1$,
 $0< \beta_{m} < \beta_{m-1}< \cdots < \beta_{0}<1$,
 $\lambda_{r},\omega_{r} \geq 0$,
 $(\lambda_{0}>0 )$.
 To deal with this case, one needs to 
 apply the Riemann-Liouville fractional integration operator of order $\alpha_0$, 
 on both sides of the \eqref{eq1M}.
In this way, following the same procedure and arguments as in the single-term case,
we reach
\begin{align}
\nonumber
\frac{\partial}{\partial t} u(\mbox{\boldmath${x}$}, t) 
 &
 + \varkappa_1
\sum\limits_{r=1}^{m_1} \bar{\lambda}_r \partial_{0t}^{\gamma_r}u(\mbox{\boldmath${x}$}, t)
+ \varkappa_2 u(\mbox{\boldmath${x}$}, t)
+\varkappa_3
\sum\limits_{r=0}^{m_2} \bar{\omega}_r D_{0t}^{-\delta_r}u(\mbox{\boldmath${x}$}, t) 
\\&+ D_{0t}^{-\alpha_0}\mathcal{A}u(\mbox{\boldmath${x}$},t) =f(\mbox{\boldmath${x}$}, t)
.\label{eq2M}
\end{align} 
In which $m_1 \geq m$ and $m_2 \leq m$.
Designing a second-order temporal difference scheme for the  multi-term time-fractional order
problem \eqref{eq2M} and  analyzing 
convergence of the resulting difference scheme, 
can be accomplished using the same procedures described in 
subsection \ref{design} and  section \ref{conv}, respectively.
\end{remark}

%%%%%%%%  Section 3 %%%%%%%%%%%%%%%%%%%%%%%%%%%%%%
\section{Numerical simulation }\label{num-sec}

In this section, numerical simulations for the model 
\eqref{eq6_1}--\eqref{eq6_3} are demonstrated to assess the reliability and efficiency of the suggested difference scheme
 \eqref{eq8_1}--\eqref{md}.
In the first example, the one-dimensional case is initially examined. In the second example, our proposed difference scheme   is employed to solve the multi-term time-fractional order model.
 Lastly, a two-dimensional model is explored in the third example. 
 Denote the errors
\begin{align}\label{eco}
E=\max\limits_{0\leq j\leq N}\|e^j\|, \quad
E_c=\max\limits_{0\leq j\leq N}\|e^j\|_{{C(\bar{\omega}_{h\tau})}}
, \quad
E_{\mathcal{A}_{h}}=\max\limits_{0\leq j\leq N} \|e^j\|_{\mathcal{A}_{h}} 
,
\end{align}
with the corresponding   convergence orders 
CO,
CO$_c$ and CO$_{\mathcal{A}_{h}}$ respectively, 
where
$\|\cdot\|$ is the L$_2$ norm,
$\|\cdot\|_{C(\bar{\omega}_{h\tau})}$
is the maximum norm defined by
$\|e\|_{C(\bar{\omega}_{h\tau})}=
\max\limits_{(x_i,t_j)\in \bar{\omega}_{h\tau} }|e|$,
and $\|\cdot\|_{\mathcal{A}_{h}}$ is
defined in \eqref{1nrm1}.
% The error $\left(E(x,t)=u(x,t)-y(x,t)\right)$,
%$(x,t) \in \bar{\omega}_{h\tau}$ and convergence order (CO) in the L$_2$ norm
%$\|\cdot\| $ and $\|\cdot\|_{C(\bar{\omega}_{h\tau})}$,
%computed 
%where
%$\|y\|_{C(\bar{\omega}_{h\tau})}=
%\max\limits_{(x_i,t_j)\in \bar{\omega}_{h\tau} }|y|$.
The temporal and spatial convergence orders are determined using the formulas
\begin{align*}
\text{CO}=
\begin{cases}
  \text{log}_{{h_1}/{h_2}} \frac{\|E(h_1,\tau)\|}{\|E(h_2,\tau)\|} , & \text{in space}, \\
  \text{log}_{{\tau_1}/{\tau_2}} \frac{\|E(h,\tau_1)\|}{\|E(h,\tau_2)\|} , & \text{in time}.
\end{cases}
\end{align*}
%-----------------------------------------------------------------------------------------------------
%-----------------------------------------------------------------------------------------------------
\begin{example}\label{ex_1}
One-dimensional single term model.
\\
Consider  the model 
 \eqref{eq6_1}--\eqref{eq6_3}   with 
  an exact analytical solution of the following form
\begin{align}
\label{exactex1}
u(x,t)=(t+ t^{2+\gamma}+t^{3+\delta}) \sin(2\pi x).
\end{align}
The initial and boundary conditions are determined based on the exact solution.
 For this problem, we choose
the elliptic operator $\mathcal{A}$  as 
\begin{align}
\label{L}
&
\mathcal{A}u=-\frac{\partial }{\partial x}\left(\varphi(x)\frac{\partial
		u}{\partial x}\right)+\varpi(x)u
,
\end{align}
with the coefficients 
$\varphi(x)=3-\cos(2x),$
$\varpi(x)=2-\sin(3x)$.
In the spatial direction, we use Lemma 3.1 from \cite{alikhanov2021high}
to discretize the operator $\mathcal{A}$   as 
\begin{align}\label{llambda}
\mathcal{A} u=\mathcal{A}_{h} u +\mathcal{O}(h^2)
,
\end{align}
where, for 
$i=1,\ldots,M-1,$
the difference operator  $\mathcal{A}_{h}$ is deﬁned as 
\begin{align}\label{ah-disc}
\mathcal{A}_{h} u_i=
-\frac{\varphi(x_{i+0.5})u_{i+1}-\left(\varphi(x_{i+0.5})+\varphi(x_{i-0.5})\right)u_i+\varphi(x_{i-0.5})u_{i-1}}{h^2}
+
\varpi(x_{i})u_i.
\end{align}
We discretize the spatial domain
$\Omega_{h}  $ with step size
$h= 1/M $.
In this way, the  uniform spatial grid is
$\Omega_{h} =\{x_{i}:~ x_{i} = i_1 h, i_1=0,1,\ldots,M\}$.
Since the convergence order of the spatial discretization 
 in equation \eqref{llambda}--\eqref{ah-disc}
is well-understood  in the literature (e.g., 
\cite{alikhanov2015new,alikhanov2021high,aslan2023}), our focus in   is on the temporal convergence and accuracies.
By utilizing our difference scheme, 
we solve this problem and report the numerical order of convergence in the temporal direction for various values of the derivative and integral orders $\gamma$, $\delta$ and $\alpha$.
The results are presented in Table  \ref{t1_ex1},
where different values of $\tau$ are considered 
while maintaining  fixed values of 
$h=1/4000$,  
$\varkappa_1=2$,
$\varkappa_2=4$,
$\varkappa_3=6$,
 and  $T=1$. 
 The Table demonstrates that the numerical scheme \eqref{eq8_1}--\eqref{md} 
 exhibits second-order accuracy in the temporal direction
 for solving the  model  \eqref{eq6_1}--\eqref{eq6_3},
 thereby corroborating our theoretical findings on convergence.
\end{example}
%-----------------------------------------------------------------------------------------------------
%-----------------------------------------------------------------------------------------------------
\begin{example}\label{ex_2}
One-dimensional multi-term time-fractional
order model.
\\
Consider  the model 
 \eqref{eq2M}   with 
 $m_1=m_2=3$, and
  an exact analytical solution of the following form
\begin{align}
\label{exactex1}
u(x,t)=(t+ t^{2}+t^{3}) \sin(4\pi x).
\end{align}
The initial and boundary conditions are determined based on the exact solution.
The elliptic operator $\mathcal{A}$ is chosen as described in equation \eqref{L},
and the coefficients of $\mathcal{A}$ are selected as
$\varphi(x)=2-\cos(x)$,
$\varpi(x)=1-\sin(x)$.
We discretize the spatial direction following the same approach used in example \eqref{ex_1}.
Employing our difference scheme, we solve the problem and evaluate the numerical order of convergence in the temporal direction for different values of 
$\gamma_r$, $\delta_r$ and $\alpha_0$. 
The results are presented in Table  \ref{t1_ex2}, with varying values of 
$\tau$ while maintaining  fixed values of 
$h = 1/7000$
$\varkappa_1=6$,
$\varkappa_2=2$,
$\varkappa_3=4$,
$\bar{\lambda}=(3,5,7)$,
$\bar{\omega}=(2,4,8)$,
and 
$T = 1$. 
The Table clearly demonstrates that the numerical scheme \eqref{eq8_1}--\eqref{md} 
  achieves second-order accuracy in the temporal direction when solving the multi-term time-fractional
order model \eqref{eq2M}. 
These findings provide further evidence that aligns with our theoretical analysis on convergence.
\end{example}
%----------------------------------------------------------------------------------------------------
\begin{example}\label{ex_3}
Two-dimensional model.
\\
In order to
check the numerical accuracy and verify the convergence rates of the presented scheme, 
we consider  the following two-dimensional
problem for
$(\mbox{\boldmath${x}$},\mbox{\boldmath${y}$}) \in \Omega_x \times \Omega_y$,
 and $t \in (0, T]$,
\begin{align}
\nonumber
&
\frac{\partial}{\partial t}
u(\mbox{\boldmath${x}$},\mbox{\boldmath${y}$}, t) 
+ \varkappa_1 \partial_{0t}^{\gamma}
u(\mbox{\boldmath${x}$},\mbox{\boldmath${y}$}, t) 
+ \varkappa_2 u(\mbox{\boldmath${x}$},\mbox{\boldmath${y}$}, t) 
+ \varkappa_3 D_{0t}^{-\delta}
u(\mbox{\boldmath${x}$},\mbox{\boldmath${y}$}, t) 
\\ \label{2deq6_1}
& \qquad \qquad \quad
+ 
D_{0t}^{-\alpha}\mathcal{A}
u(\mbox{\boldmath${x}$},\mbox{\boldmath${y}$},t) = 
f(\mbox{\boldmath${x}$},\mbox{\boldmath${y}$},t), 
\\&
	u(\mbox{\boldmath${x}$},\mbox{\boldmath${y}$},t) = 0, \quad \mbox{\boldmath${x}$}\in \partial \Omega_x,
\\&
	u(\mbox{\boldmath${x}$},\mbox{\boldmath${y}$},t) = 0, \quad \mbox{\boldmath${y}$}\in \partial \Omega_y,
\\&
	u(\mbox{\boldmath${x}$}, 0) = \phi(x), \quad \mbox{\boldmath${x}$}\in \overline \Omega
.
\label{2deq6_3}
\end{align}
with
\begin{align*}
\mathcal{A}u= - 
\frac{\partial }{\partial x}\left(\phi(x,y)\frac{\partial
		u}{\partial x}\right)
-
\frac{\partial }{\partial y}\left(\phi(x,y)\frac{\partial
		u}{\partial y}\right)
+
\psi(x,y)u.
\end{align*}
We consider  this model 
  with 
  an exact analytical solution of the following form
\begin{align}
\label{exactex1}
u(x,t)=t^{2+\alpha_0} \sin(\pi x)
,
\end{align}
with the coefficients 
$\varphi(x)=1$,
$\varpi(x)=0$.
To utilize the numerical scheme \eqref{eq8_1}--\eqref{md},
 to approximate this model, 
we discretize the spatial domain
$\Omega_{h_1,h_2} \in \mathbb{R}^2 $ with step sizes
$h_1= 1/M_1 $ and $h_2= 1/M_2 $.
In this way, the new uniform spatial grid is
$\Omega_{h_1,h_2} =\{(x_{i_1},y_{i_2} ): x_{i_1} = i_1 h_1,\, y_{i_2} = i_2 h_2,\,  i_1=0,1,\ldots,M_1,\, i_2=0,1,\ldots,M_2  \}$.

Table \ref{t1_ex3} presents the errors and their corresponding temporal convergence order for different values of 
$\gamma$, $\delta$ and $\alpha$, with 
$h = 1/1000$
$\varkappa_1=1$,
$\varkappa_2=3$,
$\varkappa_3=5$,
and 
$T = 1$.
 The observed temporal convergence order of two aligns with the theoretical convergence results, even in the case of the two-dimensional model. 
 Additionally, Table \ref{t2_ex3} demonstrates the numerical order of convergence in the spatial direction for various spatial step sizes
 $h=h_1=h_2$, while keeping the time step size fixed at $1/600$ and
 $\varkappa_1=5$,
$\varkappa_2=3$,
$\varkappa_3=1$,
$T = 1$.
  The results from the table indicate that the spatial discretization formula \eqref{ah-disc} achieves the expected second-order accuracy in the spatial direction for the two-dimensional model.

  It is important to acknowledge that, regarding two-dimensional 
  time-fractional mixed SDDWE model,
 this study exclusively presents numerical results. However, the theoretical analysis of the proposed method's performance when applied to solve two-dimensional models 
  can be perform by the same procedure of the one-dimensional model.

\end{example}

%%%%%%%%%% Section 6 %%%%%%%%%%%%%%%%%%%%%%%%%%%%%%%%%%%%%%%%%
\section{Conclusions} \label{con-sec}
We have developed a temporal second-order numerical scheme for solving time-fractional mixed SDDWEs and conducted a thorough analysis of the proposed method. 
The design of the method involved utilizing the Riemann-Liouville fractional integral operator to transform the problem into a fractional differential and integral equation with  orders belonging to $(0,1)$.
 Subsequently, a second-order analog was constructed to approximate the Riemann-Liouville fractional integral,
 and an L2-type formula of order $3-\alpha$, with $0<\alpha<1$ was employed to approximate the Caputo derivative. 
 The unconditional stability of the method in $L_2$ norms has been proven. 
 The presented method demonstrates its potential in constructing and analyzing a second-order L2-type numerical scheme for the mixed SDDWEs with multi-term time-fractional derivatives.
We conducted numerical experiments on three different examples, in which
 a one-dimensional model, a multi-term model and a two--dimensional model are analysed numerically. 
The numerical results confirmed the accuracy of the new approach and provided further support for our analytical findings.

%%%%%%%%%%   Example for Appendices %%%%%%%%%%%%%%%%%%%%%%%%%%
%%%%%%%%%%   but please try to avoid, unless the article's structure needs so %%%%%%%%%%%

%%=============================================================%%
%% Sample for another appendix section			       %%
%%=============================================================%%

%% \section{Example of another appendix section} \label{secA2}%

%%%%%%%%%%%%%%%%%%%%%%%%%% BACK MATTERS %%%%%%%%%%%%%%%%%%%%%%%%%%%%

\section*{Acknowledgements}
%\section*{Acknowledgements}
The research of 
Anatoly A. Alikhanov  
was supported by  Russian Science Foundation 
(Grant No. 22-21-00363).
The research of Chengming Huang was supported by National Natural Science Foundation of China (Grant No.  12171177).

% \section*{\small
% Conflict of interest} %%%%%%%%%%%%%%%%%%%%
%
% {\small
% The authors declare that they have no conflict of interest.}

%%%%%%%%%%%%%%%%%% REFERENCES: %%%%%%%%%%%%%%%%%%%%%%%%%%%%%%%%%%%%%%%%%%%%
%% BibTeX users: please use \bibliographystyle{spmpsci} %% for math. and phys. sci.
%% Non-BibTeX users: please use the model as below !!! %%

%%%% for FCAA - pls. include directly the Refs items here ! %%%
%%%% following STRICTLY the models below %%%%%
%%%% and ARRANGE the items in ALPHABETIC ORDER for authors' family names !!!

%\bibliographystyle{elsarticle-num}
%\bibliography{bibfile}

\begin{thebibliography}{10}
\expandafter\ifx\csname url\endcsname\relax
  \def\url#1{\texttt{#1}}\fi
\expandafter\ifx\csname urlprefix\endcsname\relax\def\urlprefix{URL }\fi
\expandafter\ifx\csname href\endcsname\relax
  \def\href#1#2{#2} \def\path#1{#1}\fi

\bibitem{asl2019new}
M.~S. Asl, M.~Javidi, B.~Ahmad, New predictor-corrector approach for nonlinear
  fractional differential equations: error analysis and stability, J. Appl.
  Anal. Comput 9~(4) (2019) 1527--1557.

\bibitem{asl2021high}
M.~S. Asl, M.~Javidi, Y.~Yan, High order algorithms for numerical solution of
  fractional differential equations, Advances in Difference Equations 2021~(1)
  (2021) 1--23.

\bibitem{asl2017improved}
M.~S. Asl, M.~Javidi, An improved {PC} scheme for nonlinear fractional
  differential equations: Error and stability analysis, Journal of
  Computational and Applied Mathematics 324 (2017) 101--117.

\bibitem{asl2018novel}
M.~S. Asl, M.~Javidi, Y.~Yan, A novel high-order algorithm for the numerical
  estimation of fractional differential equations, Journal of Computational and
  Applied Mathematics 342 (2018) 180--201.

\bibitem{roohi2015switching}
M.~Roohi, M.~P. Aghababa, A.~R. Haghighi, Switching adaptive controllers to
  control fractional-order complex systems with unknown structure and input
  nonlinearities, Complexity 21~(2) (2015) 211--223.

\bibitem{taheri2023finite}
M.~Taheri, Y.~Chen, C.~Zhang, Z.~R. Berardehi, M.~Roohi, M.~H. Khooban, A
  finite-time sliding mode control technique for synchronization chaotic
  fractional-order laser systems with application on encryption of color
  images, Optik (2023) 170948.

\bibitem{yan2021identify}
X.~Yan, Y.-X. Zhang, T.~Wei, Identify the fractional order and diffusion
  coefficient in a fractional diffusion wave equation, Journal of Computational
  and Applied Mathematics 393 (2021) 113497.

\bibitem{zhao2019anisotropic}
Y.~Zhao, F.~Wang, X.~Hu, Z.~Shi, Y.~Tang, Anisotropic linear triangle finite
  element approximation for multi-term time-fractional mixed diffusion and
  diffusion-wave equations with variable coefficient on 2d bounded domain,
  Computers \& Mathematics with Applications 78~(5) (2019) 1705--1719.

\bibitem{du2021temporal}
R.-l. Du, Z.-z. Sun, Temporal second-order difference methods for solving
  multi-term time fractional mixed diffusion and wave equations, Numerical
  Algorithms 88 (2021) 191--226.

\bibitem{ding2021development}
H.~Ding, The development of higher-order numerical differential formulas of
  caputo derivative and their applications ({I}), Computers \& Mathematics with
  Applications 84 (2021) 203--223.

\bibitem{ezz2020numerical}
S.~S. Ezz-Eldien, E.~H. Doha, Y.~Wang, W.~Cai, A numerical treatment of the
  two-dimensional multi-term time-fractional mixed sub-diffusion and
  diffusion-wave equation, Communications in Nonlinear Science and Numerical
  Simulation 91 (2020) 105445.

\bibitem{shen2021two}
J.~Shen, X.-M. Gu, Two finite difference methods based on an {H2N2}
  interpolation for two-dimensional time fractional mixed diffusion and
  diffusion-wave equations, Discrete and Continuous Dynamical Systems - B
  27~(2) (2022) 1179--1207.

\bibitem{feng2019finite}
L.~Feng, F.~Liu, I.~Turner, Finite difference/finite element method for a novel
  2{D} multi-term time-fractional mixed sub-diffusion and diffusion-wave
  equation on convex domains, Communications in Nonlinear Science and Numerical
  Simulation 70 (2019) 354--371.

\bibitem{ma2019efficient}
Z.~Ma, M.~H. Heydari, Z.~Avazzadeh, C.~Cattani, An efficient iterative approach
  for three-dimensional modified anomalous fractional sub-diffusion equations
  on a large domain, Advances in Difference Equations 2019 (2019) 1--14.

\bibitem{hajimohammadi2021numerical}
Z.~Hajimohammadi, K.~Parand, Numerical learning approximation of
  time-fractional sub diffusion model on a semi-infinite domain, Chaos,
  Solitons \& Fractals 142 (2021) 110435.

\bibitem{gao2014new}
G.-h. Gao, Z.-z. Sun, H.-w. Zhang, A new fractional numerical differentiation
  formula to approximate the caputo fractional derivative and its applications,
  Journal of Computational Physics 259 (2014) 33--50.

\bibitem{cao2015high}
J.~Cao, C.~Li, Y.~Chen, High-order approximation to caputo derivatives and
  caputo-type advection-diffusion equations ({II}), Fractional calculus and
  Applied analysis 18~(3) (2015) 735--761.

\bibitem{lv2016error}
C.~Lv, C.~Xu, Error analysis of a high order method for time-fractional
  diffusion equations, SIAM Journal on Scientific Computing 38~(5) (2016)
  A2699--A2724.

\bibitem{yan2018analysis}
Y.~Yan, M.~Khan, N.~J. Ford, An analysis of the modified l1 scheme for
  time-fractional partial differential equations with nonsmooth data, SIAM
  Journal on Numerical Analysis 56~(1) (2018) 210--227.

\bibitem{wang2020two}
Y.~Wang, Y.~Yan, Y.~Yang, Two high-order time discretization schemes for
  subdiffusion problems with nonsmooth data, Fractional Calculus and Applied
  Analysis 23~(5) (2020) 1349--1380.

\bibitem{li2018finite}
C.~Li, Q.~Yi, Finite difference method for two-dimensional nonlinear
  time-fractional subdiffusion equation, Fractional Calculus and Applied
  Analysis 21~(4) (2018) 1046--1072.

\bibitem{hendy2019novel}
A.~S. Hendy, J.~E. Mac{\'\i}as-D{\'\i}az, A novel discrete gronwall inequality
  in the analysis of difference schemes for time-fractional multi-delayed
  diffusion equations, Communications in Nonlinear Science and Numerical
  Simulation 73 (2019) 110--119.

\bibitem{huang2008time}
F.~Huang, F.~Liu, The time fractional diffusion and wave equations in an
  n-dimensional half space with mixed boundary conditions, Pacific J App Math
  1~(4) (2008) 67--77.

\bibitem{agrawal2002solution}
O.~P. Agrawal, Solution for a fractional diffusion-wave equation defined in a
  bounded domain, Nonlinear Dynamics 29 (2002) 145--155.

\bibitem{alikhanov2021crank}
A.~Alikhanov, M.~Beshtokov, M.~Mehra, The crank-nicolson type compact
  difference schemes for a loaded time-fractional hallaire equation, Fractional
  Calculus and Applied Analysis 24~(4) (2021) 1231--1256.

\bibitem{sun2006fully}
Z.-z. Sun, X.~Wu, A fully discrete difference scheme for a diffusion-wave
  system, Applied Numerical Mathematics 56~(2) (2006) 193--209.

\bibitem{jin2016two}
B.~Jin, R.~Lazarov, Z.~Zhou, Two fully discrete schemes for fractional
  diffusion and diffusion-wave equations with nonsmooth data, SIAM journal on
  scientific computing 38~(1) (2016) A146--A170.

\bibitem{zhang2023numerical}
Y.~Zhang, Z.~Wang, Numerical simulation for time-fractional diffusion-wave
  equations with time delay, Journal of Applied Mathematics and Computing
  69~(1) (2023) 137--157.

\bibitem{zhang2023local}
D.~Zhang, N.~An, C.~Huang, Local error estimates of the fourth-order compact
  difference scheme for a time-fractional diffusion-wave equation, Computers \&
  Mathematics with Applications 142 (2023) 283--292.

\bibitem{maurya2023high}
R.~K. Maurya, V.~K. Singh, A high-order adaptive numerical algorithm for
  fractional diffusion wave equation on non-uniform meshes, Numerical
  Algorithms 92~(3) (2023) 1905--1950.

\bibitem{sun2020new}
Z.-z. Sun, C.-c. Ji, R.~Du, A new analytical technique of the {L}-type
  difference schemes for time fractional mixed sub-diffusion and diffusion-wave
  equations, Applied Mathematics Letters 102 (2020) 106115.

\bibitem{Du2021}
R.-l. Du, Z.-z. Sun, Temporal second-order difference methods for solving
  multi-term time fractional mixed diffusion and wave equations, Numerical
  Algorithms 88 (2021) 191--226.

\bibitem{vabishchevich2022numerical}
P.~N. Vabishchevich, Numerical solution of the cauchy problem for volterra
  integrodifferential equations with difference kernels, Applied Numerical
  Mathematics 174 (2022) 177--190.

\bibitem{alikhanov2010priori}
A.~Alikhanov, A priori estimates for solutions of boundary value problems for
  fractional-order equations, Differential equations 46~(5) (2010) 660--666.

\bibitem{alikhanov2021high}
A.~A. Alikhanov, C.~Huang, A high-order {L}2 type difference scheme for the
  time-fractional diffusion equation, Applied Mathematics and Computation 411
  (2021) 126545.

\bibitem{Thome1993}
W.~McLean, V.~Thom{\'e}e, Numerical solution of an evolution equation with a
  positive-type memory term, The ANZIAM Journal 35~(1) (1993) 23--70.

\bibitem{alikhanov2015new}
A.~A. Alikhanov, A new difference scheme for the time fractional diffusion
  equation, Journal of Computational Physics 280 (2015) 424--438.

\bibitem{aslan2023}
A.~Khibiev, A.~Alikhanov, C.~Huang, A second-order difference scheme for
  generalized time-fractional diffusion equation with smooth solutions,
  Computational Methods in Applied Mathematics (2023).
\newblock \href {https://doi.org/https://doi.org/10.1515/cmam-2022-0089}
  {\path{doi:https://doi.org/10.1515/cmam-2022-0089}}.

\end{thebibliography}

%----------------------------------------------------------------------------------
% table 1
%----------------------------------------------------------------------------------
\begin{table}[h!]
	\begin{center}
		\caption{
Error and temporal convergence order of example \ref{ex_1} with varying temporal step size $\tau=T/N$ 
for different parameters $\gamma$, $\delta$,  $\alpha $, 
while  fixed values of 
$\varkappa_1=2 $,
$\varkappa_2=4 $,
$\varkappa_3=6 $,
and
 $h=1/4000$, at $T=1$. }
		\begin{tabular}{llccccccccc} % <-- Alignments: 1st column left, 2nd middle and 3rd right, with vertical lines in between
			\hline
%-------------------------------------------------------------------------------------------------------
%& &  \multicolumn{2}{c}{$\max\limits_{0\leq j\leq N}\|z^j\| $}   &&   \multicolumn{2}{c}{ $\max\limits_{0\leq j\leq N}\|z^j\|_{{C(\bar{\omega}_{h\tau})}}$} && \multicolumn{2}{c}{$\max\limits_{0\leq j\leq N}\|z^j_{\bar x}]|_{0}$} \\
%%-------------------------------------------------------------------------------------------------------
%\cmidrule{3-4} \cmidrule{6-7} \cmidrule{9-10}
%-------------------------------------------------------------------------------------------------------
 &$ N $ & $E $ &  CO  & $E_c$ & CO$_c$  & $E_{\mathcal{A}_{h}}$ & CO$_{\mathcal{A}_{h}}$
\\
\hline
$\gamma=0.9$ 
& 20 &  5.9025e-04 &              &  8.8843e-04  &              & 6.0193e-03  \\     
$\delta=0.5$
& 40 &  1.6894e-04 &     1.8048  &  2.5242e-04  &     1.8154   &  1.7212e-03  &     1.8061  \\     
$\alpha=0.1 $
& 80 &  4.6971e-05 &     1.8467  &  6.9817e-05  &     1.8542   &  4.7814e-04  &     1.8479  \\     
& 160 &  1.2632e-05 &     1.8946  &  1.8718e-05  &     1.8992   &  1.2851e-04  &     1.8955  \\     
& 320 &  3.1565e-06 &     2.0008  &  4.6796e-06  &     2.0000   &  3.2080e-05  &     2.0022  \\  
%---------------------------------------------------------------------------------------------------------
\hline
$\gamma=0.1$
& 20 &  1.5857e-03 &              &  2.3081e-03  &              & 1.6099e-02  \\     
$\delta=0.9$
& 40 &  4.0536e-04 &     1.9679  &  5.8994e-04  &     1.9681   &  4.1154e-03  &     1.9679  \\     
$\alpha=0.5 $
& 80 &  1.0267e-04 &     1.9811  &  1.4942e-04  &     1.9812   &  1.0423e-03  &     1.9813  \\     
& 160 &  2.5681e-05 &     1.9993  &  3.7382e-05  &     1.9990  &  2.6069e-04  &     1.9993  \\     
& 320 &  6.2007e-06 &     2.0502  &  9.0376e-06  &     2.0483  &  6.2933e-05  &     2.0504  \\ 
			\hline
%---------------------------------------------------------------------------------------------------------
$\gamma=0.5$
& 20 &  1.3755e-03 &              &  1.9881e-03  &             & 1.3948e-02  \\     
$\delta=0.1$
& 40 &  3.4526e-04 &     1.9943  &  4.9838e-04  &     1.9961   &  3.4999e-03  &     1.9947  \\     
$\alpha=0.9 $
& 80 &  8.6350e-05 &     1.9994  &  1.2455e-04  &     2.0006   &  8.7517e-04  &     1.9997  \\     
& 160 &  2.1402e-05 &     2.0124 &  3.0848e-05  &     2.0134   &  2.1691e-04  &     2.0125  \\     
& 320 &  5.1218e-06 &     2.0630 &  7.3761e-06  &     2.0642   &  5.1871e-05  &     2.0641  \\  
\hline
		\end{tabular}\label{t1_ex1}
	\end{center}
\end{table}

%----------------------------------------------------------------------------------
% table 2
%----------------------------------------------------------------------------------
\begin{table}[h!]
	\begin{center}
		\caption{ 
Error and temporal convergence order of example \ref{ex_2} with varying temporal step size $\tau=T/N$ 
for different parameters $\gamma_r=(\gamma_1,\gamma_2,\gamma_3)$,
 $\delta_r=(\delta_1,\delta_2,\delta_3)$,  $\alpha_0 $, 
while  fixed values of 
$\lambda=(\bar{\lambda}_1,\bar{\lambda}_2,\bar{\lambda}_3)$,
$\omega=(\bar{\omega}_1,\bar{\omega}_2,\bar{\omega}_3)$,
$\varkappa_1=6$,
$\varkappa_2=2$,
$\varkappa_3=4$,
and
 $h=1/7000$, at $T=1$. }
		\begin{tabular}{llccccccccc} % <-- Alignments: 1st column left, 2nd middle and 3rd right, with vertical lines in between
			\hline
%-------------------------------------------------------------------------------------------------------
%& &  \multicolumn{2}{c}{$\max\limits_{0\leq j\leq N}\|z^j\| $}   &&   \multicolumn{2}{c}{ $\max\limits_{0\leq j\leq N}\|z^j\|_{{C(\bar{\omega}_{h\tau})}}$} && \multicolumn{2}{c}{$\max\limits_{0\leq j\leq N}\|z^j_{\bar x}]|_{0}$} \\
%%-------------------------------------------------------------------------------------------------------
%\cmidrule{3-4} \cmidrule{6-7} \cmidrule{9-10}
%-------------------------------------------------------------------------------------------------------
 &$ N $ & $E $ &  CO  & $E_c$ & CO$_c$  & $E_{\mathcal{A}_{h}}$ & CO$_{\mathcal{A}_{h}}$
\\
\hline
$\gamma=(0.5,0.3,0.1)$ 
& 20 &  7.0665e-04 &              &  1.0626e-03  &               & 9.6255e-03  \\     
$\delta=(0.9,0.5,0.1)$ 
& 40 &  1.8162e-04 &     1.9601  &  2.7221e-04  &     1.9648   &  2.4732e-03  &     1.9605  \\     
$\alpha_0=0.6$
& 80 &  4.6164e-05 &     1.9761  &  6.9025e-05  &     1.9796   &  6.2847e-04  &     1.9764  \\     
$\omega=(2,4,8)$
& 160 &  1.1543e-05 &     1.9997  &  1.7220e-05  &     2.0030   &  1.5712e-04  &     2.0000  \\     
$\lambda=(3,5,7)$
& 320 &  2.7479e-06 &     2.0706  &  4.0830e-06  &     2.0764   &  3.7383e-05  &     2.0714  \\ 
%---------------------------------------------------------------------------------------------------------
\hline
$\gamma=(0.4,0.3,0.2)$
& 20 &  7.0665e-04 &              &  1.0626e-03  &               & 9.6255e-03  \\     
$\delta=(0.9,0.8,0.7)$
& 40 &  1.8162e-04 &     1.9601  &  2.7221e-04  &     1.9648   &  2.4732e-03  &     1.9605  \\     
$\alpha_0=0.5 $
& 80 &  4.6164e-05 &     1.9761  &  6.9025e-05  &     1.9796   &  6.2847e-04  &     1.9764  \\     
$\lambda=(3,5,7)$
& 160 &  1.1543e-05 &     1.9997  &  1.7220e-05  &     2.0030   &  1.5712e-04  &     2.0000  \\     
$\omega=(2,4,8)$
& 320 &  2.7479e-06 &     2.0706  &  4.0830e-06  &     2.0764   &  3.7383e-05  &     2.0714  \\ 
\hline
%---------------------------------------------------------------------------------------------------------
$\gamma=(0.8,0.7,0.6)$ 
& 20 &  3.6326e-04 &              &  6.4096e-04  &               & 5.0473e-03  \\     
$\delta=(0.3,0.2,0.1)$ 
& 40 &  1.0889e-04 &     1.7382  &  1.8422e-04  &     1.7988   &  1.5037e-03  &     1.7470  \\     
$\alpha_0=0.9$
& 80 &  3.0640e-05 &     1.8293  &  5.0459e-05  &     1.8683   &  4.2165e-04  &     1.8344  \\     
$\lambda=(3,5,7)$
& 160 &  8.2569e-06 &     1.8918  &  1.3344e-05  &     1.9189   &  1.1337e-04  &     1.8951  \\     
$\omega=(2,4,8)$
& 320 &  2.1011e-06 &     1.9745  &  3.3467e-06  &     1.9954   &  2.8795e-05  &     1.9771  \\   
\hline
		\end{tabular}\label{t1_ex2}
	\end{center}
\end{table}

%----------------------------------------------------------------------------------
% table 3
%----------------------------------------------------------------------------------
\begin{table}[h!]
	\begin{center}
		\caption{
Error and temporal convergence order of example \ref{ex_3} with varying temporal step size $\tau=T/N$ 
for different parameters $\gamma$, $\delta$,  $\alpha $, 
while  fixed values of 
$\varkappa_1=1 $,
$\varkappa_2=3 $,
$\varkappa_3=5 $,
and
 $h=1/1000$, at $T=1$.
}
		\begin{tabular}{llccccccccc} % <-- Alignments: 1st column left, 2nd middle and 3rd right, with vertical lines in between
			\hline
%-------------------------------------------------------------------------------------------------------
%& &  \multicolumn{2}{c}{$\max\limits_{0\leq j\leq N}\|z^j\| $}   &&   \multicolumn{2}{c}{ $\max\limits_{0\leq j\leq N}\|z^j\|_{{C(\bar{\omega}_{h_1h_2\tau})}}$} && \multicolumn{2}{c}{$\max \limits_{0\leq j\leq N}
%( \|z^j_{\bar x}]_{(1)}|_{0} + \|z^j_{\bar y}]_{(2)}|_{0} )   $}
%\\
%%-------------------------------------------------------------------------------------------------------
%\cmidrule{3-4} \cmidrule{6-7} \cmidrule{9-10}
%-------------------------------------------------------------------------------------------------------
 &$ N $ & $E $ &  CO  & $E_c$ & CO$_c$  & $E_{\mathcal{A}_{h}}$ & CO$_{\mathcal{A}_{h}}$
\\
\hline
$\gamma=0.9$ 
& 10 &  4.4024e-04 &              &  8.8048e-04  &            & 1.9559e-03\\
$\delta=0.5$ 
& 20 &  1.1770e-04 &     1.9031  &  2.3541e-04  &     1.9031  &  5.2294e-04  &     1.9031\\
$\alpha=0.1 $
&40 &  3.1073e-05 &     1.9214  &  6.2146e-05  &     1.9214   &  1.3805e-04  &     1.9214\\
& 80 &  7.9946e-06 &     1.9586  &  1.5989e-05  &     1.9586   &  3.5519e-05  &     1.9586\\
& 160 &  1.8990e-06 &     2.0738  &  3.7981e-06  &     2.0738   &  8.4372e-06  &     2.0738\\
%---------------------------------------------------------------------------------------------------------
\hline
%---------------------------------------------------------------------------------------------------------
$\gamma=0.1$ 
& 10 &  1.3004e-03 &              &  2.6008e-03  &             & 5.7775e-03\\
$\delta=0.9$ 
& 20 &  3.2739e-04 &     1.9898  &  6.5479e-04  &     1.9898   &  1.4546e-03  &     1.9898\\
$\alpha=0.5 $
& 40 &  8.2798e-05 &     1.9834  &  1.6560e-04  &     1.9834   &  3.6786e-04  &     1.9834\\
& 80 &  2.0730e-05 &     1.9979  &  4.1460e-05  &     1.9979   &  9.2101e-05  &     1.9979\\
& 160 &  5.0397e-06 &     2.0403  &  1.0079e-05  &     2.0403   &  2.2391e-05  &     2.0403\\

%---------------------------------------------------------------------------------------------------------
\hline
%---------------------------------------------------------------------------------------------------------
$\gamma=0.5$
& 10 &  1.4239e-03 &             &  2.8478e-03  &               & 6.3262e-03\\
$\delta=0.1$
& 20 &  3.7347e-04 &     1.9308  &  7.4693e-04  &     1.9308   &  1.6593e-03  &     1.9308\\
$\alpha=0.9 $
&40 &  9.5757e-05 &     1.9635  &  1.9151e-04  &     1.9635   &  4.2544e-04  &     1.9635\\
& 80 &  2.4281e-05 &     1.9795  &  4.8562e-05  &     1.9795   &  1.0788e-04  &     1.9795\\
& 160 &  6.0565e-06 &     2.0033  &  1.2113e-05  &     2.0033   &  2.6908e-05  &     2.0033\\
 %---------------------------------------------------------------------------------------------------------
\hline
		\end{tabular}\label{t1_ex3}
	\end{center}
\end{table}

%----------------------------------------------------------------------------------
% table 3
%----------------------------------------------------------------------------------
\begin{table}[h!]
	\begin{center}
		\caption{
Error and spatial convergence order of example \ref{ex_3} with varying spatial step size 
 $h_1=h_2=1/M$,
for different parameters $\gamma$, $\delta$,  $\alpha $, 
while  fixed values of 
$\varkappa_1=5 $,
$\varkappa_2=3 $,
$\varkappa_3=1 $,
and
 $\tau=1/600$, at $T=1$.
}
		\begin{tabular}{llccccccccc} % <-- Alignments: 1st column left, 2nd middle and 3rd right, with vertical lines in between
			\hline
%-------------------------------------------------------------------------------------------------------
%& &  \multicolumn{2}{c}{$\max\limits_{0\leq j\leq N}\|z^j\| $}   &&   \multicolumn{2}{c}{ $\max\limits_{0\leq j\leq N}\|z^j\|_{{C(\bar{\omega}_{h_1h_2\tau})}}$} && \multicolumn{2}{c}{$\max \limits_{0\leq j\leq N}
%( \|z^j_{\bar x}]_{(1)}|_{0} + \|z^j_{\bar y}]_{(2)}|_{0} )   $}
%\\
%%-------------------------------------------------------------------------------------------------------
%\cmidrule{3-4} \cmidrule{6-7} \cmidrule{9-10}
%-------------------------------------------------------------------------------------------------------
 &$ M $ & $E $ &  CO  & $E_c$ & CO$_c$  & $E_{\mathcal{A}_{h}}$ & CO$_{\mathcal{A}_{h}}$
\\
\hline
$\gamma=0.9$ 
& 10 &  2.0708e-03 &              &  4.1416e-03  &              & 9.1625e-03\\
$\delta=0.5$
& 20 &  5.1741e-04 &     2.0008  &  1.0348e-03  &     2.0008   &  2.2964e-03  &     1.9964\\
$\alpha=0.1 $
& 40 &  1.2925e-04 &     2.0012  &  2.5850e-04  &     2.0012   &  5.7408e-04  &     2.0000\\
& 80 &  3.2220e-05 &     2.0041  &  6.4439e-05  &     2.0041   &  1.4314e-04  &     2.0039\\
& 160 &  7.9632e-06 &     2.0165 &  1.5926e-05  &     2.0165   &  3.5379e-05  &     2.0164\\
%---------------------------------------------------------------------------------------------------------
\hline
$\gamma=0.1$ 
& 10 &  2.0384e-03 &             &  4.0768e-03  &              & 9.0192e-03\\
$\delta=0.9$ 
& 20 &  5.0917e-04 &     2.0012  &  1.0183e-03  &     2.0012   &  2.2598e-03  &     1.9968\\
$\alpha=0.5$
& 40 &  1.2705e-04 &     2.0027  &  2.5410e-04  &     2.0027   &  5.6432e-04  &     2.0016\\
& 80 &  3.1533e-05 &     2.0104  &  6.3067e-05  &     2.0104   &  1.4009e-04  &     2.0102\\
& 160 &  7.6549e-06 &     2.0424 &  1.5310e-05  &     2.0424   &  3.4009e-05  &     2.0424\\
%---------------------------------------------------------------------------------------------------------
\hline
%---------------------------------------------------------------------------------------------------------
$\gamma=0.5$
& 10 &  1.0515e-03 &             &  2.1030e-03  &              & 4.6525e-03\\
$\delta=0.1$ 
& 20 &  2.6300e-04 &     1.9993  &  5.2600e-04  &     1.9993   &  1.1673e-03  &     1.9949\\
$\alpha=0.9 $
& 40 &  6.5545e-05 &     2.0045  &  1.3109e-04  &     2.0045   &  2.9113e-04  &     2.0034\\
& 80 &  1.6161e-05 &     2.0200  &  3.2321e-05  &     2.0200   &  7.1795e-05  &     2.0197\\
& 160 &  3.8131e-06 &     2.0834 &  7.6262e-06  &     2.0834  &  1.6941e-05  &     2.0834 \\
%---------------------------------------------------------------------------------------------------------
\hline
%---------------------------------------------------------------------------------------------------------
		\end{tabular}\label{t2_ex3}
	\end{center}
\end{table}

\end{document}